\newcommand{\QH}{\mathbb{H}}
\newcommand{\R}{{\mathbb{R}}}
\newcommand{\interno}[2]{\left<#1,#2\right>}	
\newcommand{\set}[2]{\left\{#1 : #2\right\}}	
\DeclareMathOperator{\spn}{span}
\newtheoremstyle{fancy}
	{}
	{}
	{\slshape}
	{}
	{\scshape}
	{.}
	{.5em}
	{\thmname{#1}\thmnumber{ #2}\thmnote{\textbf{ (#3)}}}
\newcounter{theorems}
\newcounter{maintheorems}
\theoremstyle{fancy}
\newtheorem{prop}[theorems]{Proposition}
\newtheorem{thm}[theorems]{Theorem}
\newtheorem*{conj}{Conjecture}
\newtheorem*{example}{Example}
\newtheorem{main}[maintheorems]{Theorem}
\title{Conformal immersion of Riemannian products in low codimension}
\author{Felippe Guimar\~aes}
\author{Bruno Mendon\c{c}a}
\address[Felipe Guimar\~aes]{This study was financed in part by the Coordenação de Aperfeiçoamento de Pessoal de Nível Superior – Brasil (CAPES) – Finance Code 001}
\begin{document}

\maketitle

\begin{abstract}
	We proved that a conformal immersion of $M_0^{n_0}\times M_1^{n_1}$ as an hypersurface in a Euclidean space must be an extrinsic product of immersions, under the assumption that $n_0, n_1 \geq 2$ and that $M^{n_0}_0\times M^{n_1}_1$ is not conformally flat. We also stated a similar theorem for an arbitrary number of factors, more precisely, a conformal immersion $f\colon M^{n_0}_0 \times \cdots \times M^{n_k}_k \rightarrow \R^{n+k}$ must be an extrinsic product of immersions if one of the factors admits a plane with vanishing curvature and the remaining factors are not flat.
\end{abstract}

The simplest way to constructing an immersion of a Riemannian product manifold into a Euclidean space is to take an extrinsic product of immersions, that is, a product of immersions of each factor of the product manifold. A natural problem in the submanifold theory is to provide sufficient conditions for an immersion of a Riemannian product to be decomposed as an extrinsic product, and there are plenty of works in this subject, for instance,  \cite{eijiriMinimal,barbosaDajczerTojeiro,dajczerVlachosSnull,alexanderMaltzGlobal, Moore1971, dajczerTojeiroWarpedCod2, tojeiroConfNolk, nolkerRhamtype}. Some of them ask for intrinsic proprieties to decompose an arbitrary manifold, without the initially assumption that is already a product, in structures even more general than Riemannian products, as in \cite{tojeiroRhamconf, tojeiroRhamtype}. 

In low codimension, it turns out that there is not enough space  to the immersion of a Riemannian product manifold to be very complicated. Moore, in his outstanding work \cite{Moore1971}, showed that an immersion of a Riemannian product must be an extrinsic product when the factors are not flat and the codimension is the minimal possible. More precisely, he showed that 

\begin{thm}[Moore \cite{Moore1971}]\label{mooreTheo}
	Let $f\colon M^n \rightarrow \R^{n+k}$ be an isometric immersion of a Riemannian product manifold $M^n =\prod_{i=0}^{k} M^{n_i}_i $, where $n =\sum_{i=0}^{k} n_i$ and $n_i \geq 2$ for all $1 \leq i \leq k$. If the subset of points of $M_i^{n_i}$ at which all sectional curvatures vanish has empty interior for all $1 \leq i \leq k$, then $M^{n_0}_0$ is flat and $f(M)$ is an open subset of a $n_0$-cylinder over an extrinsic product of hypersurface immersions.
\end{thm}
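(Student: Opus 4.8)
The statement is local and first order, so the plan is to analyze the second fundamental form $\alpha$ of $f$ pointwise and then integrate. Fix $p=(p_0,\dots,p_k)\in M$ and let $T_pM=E_0\oplus\cdots\oplus E_k$ be the orthogonal decomposition induced by the product, $E_i=T_{p_i}M_i^{n_i}$. Since $M$ is a Riemannian product, its curvature tensor is adapted to this splitting: $R(X,Y)=0$ if $X\in E_i$ and $Y\in E_j$ with $i\neq j$, each $R(X,Y)$ preserves every $E_i$, and $R$ restricted to $E_i$ is the curvature tensor $R_{M_i}$. Inserting this into the Gauss equation $\langle R(X,Y)Z,W\rangle=\langle\alpha(X,W),\alpha(Y,Z)\rangle-\langle\alpha(X,Z),\alpha(Y,W)\rangle$, the first thing I would record is that for each $i$ the restriction $\beta_i:=\alpha|_{E_i\times E_i^{\perp}}$, where $E_i^{\perp}=\bigoplus_{j\neq i}E_j$, is a \emph{flat bilinear form} into $N_pM\cong\R^k$: for $X,X'\in E_i$ and $Y,Y'\in E_i^{\perp}$ the product structure gives $R(X,X')Y'\in E_i$, hence $\langle R(X,X')Y',Y\rangle=0$, and by Gauss this is exactly $\langle\beta_i(X,Y),\beta_i(X',Y')\rangle=\langle\beta_i(X,Y'),\beta_i(X',Y)\rangle$. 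In the same way the diagonal block $\alpha|_{E_i\times E_i}$ is a symmetric bilinear form whose Gauss-associated curvature is $R_{M_i}$.

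The heart of the proof is the pointwise claim that, at every point of the dense open subset $M'\subseteq M$ where all of $M_1,\dots,M_k$ are non-flat, $\alpha$ block-diagonalizes with respect to the product: there is an orthogonal splitting $N_pM=W_1\oplus\cdots\oplus W_k$ with $\dim W_i=1$, $\alpha(E_i,E_i)\subseteq W_i$ for $i\geq 1$, $\alpha(E_i,E_j)=0$ whenever $i\neq j$, and $\alpha(E_0,\,\cdot\,)=0$. To get this I would invoke Moore's lemma on flat bilinear forms — roughly, a flat bilinear form whose positive-definite target has dimension smaller than one of its slots must be degenerate in that slot, and once that dimension is used up the form is forced into one-dimensional blocks — and balance it against the accounting that the codimension is exactly $k$, the number of non-flat factors. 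A case analysis of the off-diagonal blocks (those among $E_1,\dots,E_k$, and those involving $E_0$) shows that any nonzero mixing would, via Moore's lemma, either render one of the diagonal blocks $\alpha|_{E_i\times E_i}$ too degenerate to sustain $R_{M_i}$, flattening $M_i$ at $p$ against the hypothesis, or overrun the available codimension $k$; hence all mixing vanishes, the $k$ diagonal blocks occupy $k$ mutually independent normal lines $W_1,\dots,W_k$, and nothing is left for $\alpha(E_0,\,\cdot\,)$, so $M_0$ is flat at $p$ by Gauss. Since $M'$ is dense and $\alpha$ is continuous, the conclusions $\alpha(E_0,\,\cdot\,)=0$, $\alpha(E_i,E_j)=0$ for $i\neq j$, and the existence of the normal splitting $NM=W_1\oplus\cdots\oplus W_k$ extend to all of $M$; in particular $M_0^{n_0}$ is flat.

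It remains to integrate. The tangent distributions $E_0,\dots,E_k$ are parallel, being the factors of the product, and the Codazzi equation $(\nabla^{\perp}_X\alpha)(Y,Z)=(\nabla^{\perp}_Y\alpha)(X,Z)$ together with the block structure just obtained forces the normal line subbundles $W_1,\dots,W_k$ to be smooth and $\nabla^{\perp}$-parallel. With $TM$ and $NM$ compatibly split and the tangent factors parallel, a de Rham--N\"olker type decomposition (see \cite{nolkerRhamtype}) shows that $f$ is locally an extrinsic product of immersions, one per factor. Because $\alpha$ vanishes on the parallel distribution $E_0$, the $M_0$-factor is an open subset of an affine $\R^{n_0}$, so $f(M)$ is an open subset of an $n_0$-cylinder; on $M_1^{n_1}\times\cdots\times M_k^{n_k}$ the decomposition produces hypersurface immersions $f_i\colon M_i^{n_i}\to\R^{n_i+1}$ whose extrinsic product is the base of the cylinder, as claimed. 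I expect the genuinely delicate step to be the pointwise block-diagonalization of $\alpha$: flatness of the off-diagonal blocks is automatic, but flatness alone yields only degeneracy, not vanishing, of those blocks, and extracting the vanishing requires the full interplay of Moore's lemma with the exact codimension count and the non-flatness of the $k$ curved factors — plus the technical care that, since one assumes only that the flat loci have empty interior, the block structure must first be established on the dense open set $M'$ and then propagated by continuity before the Codazzi and integration machinery can be run globally.
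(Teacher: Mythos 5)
Note first that the paper does not prove this statement: Theorem \ref{mooreTheo} is quoted from \cite{Moore1971}, and the only thing you can compare against is the strategy the paper attributes to Moore and adapts in its own Propositions \ref{prop2factors} and \ref{propkfactors}. Your outline follows that architecture correctly (mixed blocks of $\alpha$ are flat bilinear forms, adaptedness on the dense open set $M'$ where $M_1,\dots,M_k$ are non-flat, extension by continuity, then a decomposition theorem plus the codimension count $p_i\geq 1$ for the $k$ curved factors forcing $p_0=0$, $p_i=1$). But there is a genuine gap, and it is exactly the step you flag at the end: the pointwise vanishing of the mixed blocks $\alpha(E_i,E_j)$, $i\neq j$, at a point of $M'$ is the entire algebraic content of Moore's theorem, and in your write-up it is asserted, not proved. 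The version of ``Moore's lemma'' you invoke (a flat form into a too-small positive-definite target must be degenerate) yields only nullity/degeneracy statements; it does not by itself kill the mixing, as you concede (``flatness alone yields only degeneracy, not vanishing''). No case analysis is actually supplied showing that nonzero mixing either flattens some $M_i$ at $p$ or exceeds codimension $k$, and the way non-flatness enters quantitatively is missing: in Moore's argument, as in the paper's conformal analogues, one fixes curved $2$-planes in each factor $M_i$, $i\geq 1$, reduces to the case where all factors are two-dimensional, applies the structure results for flat bilinear forms according to whether $\mathcal{S}(T)$ is degenerate or not (this is where \cite{Moore1977}-type results are used in the paper), and then needs a separate perturbation argument to pass from the chosen planes to the full tangent spaces $E_i$. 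Until that core is stated and proved, the proposal is a plan rather than a proof.

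Two smaller points. First, once adaptedness is known on $M'$, the rest of your pointwise claims do follow by Gauss: adaptedness gives $\spn\alpha(E_i,E_i)\perp\spn\alpha(E_j,E_j)$ for $i\neq j$, non-flatness gives $\dim\spn\alpha(E_i,E_i)\geq 1$ for $i\geq 1$, so the codimension $k$ forces these spans to be orthogonal lines and $\alpha(E_0,\cdot)=0$, whence $M_0$ is flat; so the only missing ingredient really is adaptedness. Second, your globalization via ``Codazzi makes the normal lines $W_i$ parallel'' is shaky as stated, because $W_i=\spn\alpha(E_i,E_i)$ can drop rank on $M\setminus M'$ and need not extend as a smooth line bundle; it is also unnecessary, since the decomposition theorem you need (Moore's de Rham-type theorem, or \cite{nolkerRhamtype}; the conformal counterpart is Theorem \ref{tojeiroRham} in the paper) requires only that $\alpha$ be adapted to the product net, which does extend to all of $M$ by continuity, and the dimension count then delivers the $n_0$-cylinder over an extrinsic product of hypersurface immersions directly.
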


The first step of his proof was to reduce the question to one of algebraic nature by proving an extrinsic decomposition De Rham type theorem. In the conformal realm, Tojeiro \cite{tojeiroConfNolk} proved an analogous theorem. In short, the mentioned works of Tojeiro and Moore was the main motivation of this paper. We adapted the technique used by Moore to obtain the following theorem:

\begin{main}\label{main2factors}
	Let $f \colon M^n \rightarrow \R^{n+1}$ be a conformal immersion of a Riemannian product manifold $M^n =M^{n_0}_0 \times M^{n_1}_1 $, where $n =n_0+n_1$ and $n_i \geq 2$ for $0 \leq i \leq 1$. If $M^n$ is not conformally flat at any point, then one of the following holds
	\begin{enumerate}[(i)]
		\item There exist an extrinsic product $\tilde{f}\colon M^n \rightarrow \R^{n+1}$ of isometric immersions, a homothety $H$ of $\R^{n+1}$ and an inversion $I$ in $\R^{n+1}$ with respect to a unit sphere such that
		$$f = I \circ H \circ {\tilde{f}}.$$
		In particular, one factor is flat.
		
		\item After possibly relabeling factors, there are isometric immersions $f_0 \colon M^{n_0}_0 \rightarrow \QH^{n_0+s_0}_{-c}$ and $f_1\colon M^{n_1}_1 \rightarrow \mathbb{S}^{n_1+s_1}_c$ such that $s_0+s_1 = 1$ and
		$$f = \Theta \circ (f_0 \times f_1),$$ where $\Theta\colon \QH^{n_0+s_0}_{-c} \times \mathbb{S}_c^{n_1+s_1} \rightarrow \R^{n_0+n_1+1}$ is a conformal representation of $\R^{n+1}$. In particular, one factor has constant and nonzero curvature.
	\end{enumerate}
\end{main}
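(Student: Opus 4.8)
The plan is to adapt Moore's two-step scheme from \cite{Moore1971} to the conformal category. Moore's argument rests on two pillars: an extrinsic De Rham--type decomposition theorem, and a pointwise algebraic reduction obtained by combining the Gauss equation with the fact that the curvature tensor of a Riemannian product is adapted to the product splitting. In our situation the first pillar is available as Tojeiro's conformal De Rham--type theorem \cite{tojeiroConfNolk}, so the substance of the proof is to verify the algebraic hypothesis it requires, namely that the conformal analogue of the second fundamental form of $f$ be adapted to the decomposition $TM = TM_0 \oplus TM_1$.

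To bring the Gauss equation into play I would first replace $f$ by its isometric light-cone lift: embedding $\R^{n+1}$ into the light cone $\mathbb{V}^{n+2}$ of the Lorentz space $\mathbb{L}^{n+3}$ and rescaling by the conformal factor $\varphi$ of $f$ yields an \emph{isometric} immersion $\mathcal{F} \colon M^n \to \mathbb{V}^{n+2} \subset \mathbb{L}^{n+3}$ of the Riemannian product, whose extrinsic geometry encodes $f$, its conformal factor and its conformal Gauss map. Since $\mathbb{L}^{n+3}$ is flat, the Gauss equation of $\mathcal{F}$ carries no ambient curvature term and therefore has exactly the shape it has in Moore's setting; the new features are that the normal bundle is now of Lorentzian type and that $\mathcal{F}(M)$ is confined to the light cone. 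Taking the latter constraint into account (and, as is standard in this context, enlarging the relevant bilinear form by the metric in order to absorb an auxiliary term built from $g$), the Gauss equation can be rewritten as
\[
	R(X,Y,Z,W) = \langle \hat\beta(X,W), \hat\beta(Y,Z) \rangle - \langle \hat\beta(X,Z), \hat\beta(Y,W) \rangle ,
\]
where $\hat\beta$ is a symmetric bilinear form on $TM$ with values in a low-dimensional vector space equipped with an inner product $\langle\,,\,\rangle$ of indefinite type, assembled from the shape operator of $f$ and the Schouten-type tensor forced by the conformal change.

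The algebraic heart of the matter is now the following. Since $M^n$ is a Riemannian product, $R = R^0 \oplus R^1$, and in particular $R(X,Y) = 0$ whenever $X \in TM_0$ and $Y \in TM_1$. Substituting this into the displayed Gauss equation shows that $\hat\beta$ is a \emph{flat bilinear form with respect to the splitting}, that is,
\[
	\langle \hat\beta(X_0,X_0'), \hat\beta(Y_1,Y_1') \rangle = \langle \hat\beta(X_0,Y_1'), \hat\beta(Y_1,X_0') \rangle \qquad \text{for all } X_0, X_0' \in TM_0,\ Y_1, Y_1' \in TM_1 .
\]
Because $n_0, n_1 \geq 2$, I would then invoke a version of Moore's lemma on flat bilinear forms valid for indefinite codomains. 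Pointwise this produces a dichotomy: either (a) $\hat\beta(TM_0, TM_1) = 0$, or (b) $\hat\beta(TM_0, TM_1)$ is contained in a null line and the resulting degeneracy of $\hat\beta$ forces one of the two factors to have constant sectional curvature, with a further degenerate sub-case in which both factors are space forms of reciprocal curvatures. The conformal Codazzi equations then upgrade alternative (a) to the statement that the corresponding normal sub-bundles of $\mathcal{F}$ are parallel, which is precisely the hypothesis under which Tojeiro's conformal De Rham--type theorem yields $f = \Theta \circ (f_0 \times f_1)$ for some conformal representation $\Theta$ of $\R^{n+1}$ and some conformal immersions $f_i$ of the factors $M_i^{n_i}$.

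It then remains to read off alternatives (i) and (ii) from the classification of conformal products of $\R^{n+1}$: up to Möbius transformations, a product $Q_0 \times Q_1$ conformal to an open subset of $\R^{n+1}$ is either a Euclidean product $\R^{a_0} \times \R^{a_1}$ or a product $\QH^{a_0}_{-c} \times \mathbb{S}^{a_1}_c$ with $a_0 + a_1 = n+1$. In the first case one writes the Möbius transformation as a homothety $H$ followed by an inversion $I$, absorbing the remaining isometry into $\tilde{f} := f_0 \times f_1$, so that $f = I \circ H \circ \tilde{f}$; matching $a_i = n_i + \ell_i$ with $\ell_0 + \ell_1 = 1$ forces one $\ell_i$ to vanish, so that factor is an open subset of $\R^{n_i}$ and hence flat --- this is (i). In the second case $f = \Theta \circ (f_0 \times f_1)$ with $f_0 \colon M_0^{n_0} \to \QH^{n_0+s_0}_{-c}$, $f_1 \colon M_1^{n_1} \to \mathbb{S}^{n_1+s_1}_c$ and $s_0 + s_1 = 1$; again one $s_i$ vanishes, so that factor is an open subset of a round sphere, of constant nonzero curvature --- this is (ii). The hypothesis that $M^n$ is nowhere conformally flat enters twice: it discards the degenerate sub-case of the Moore-type lemma in which both factors would be space forms of reciprocal curvature (which would make $M^n$ conformally flat), and it keeps the ranks appearing in the argument locally constant on a dense open set, so that the pointwise conclusions globalize. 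The step I expect to be the main obstacle is exactly the Moore-type lemma in this indefinite setting: Moore's classical lemma assumes a positive-definite codomain, whereas the light-cone model unavoidably produces one of Lorentzian type, and it is the null directions there that separate alternative (i) from alternative (ii); controlling those null cases, and matching each of them with the correct type of conformal representation of $\R^{n+1}$, is the delicate part.
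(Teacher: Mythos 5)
Your overall architecture is the same as the paper's (light-cone representative, flat bilinear forms, then Tojeiro's Theorem \ref{tojeiroRham}), but the central step --- proving that $\alpha^f$ is adapted to the product net --- is precisely what you leave to an unspecified ``Moore-type lemma for indefinite codomains,'' and that is not a citable black box: it is the actual content of the paper (Proposition \ref{prop2factors}). Mixed flatness of a form with Lorentzian codomain does not by itself force adaptedness; the image of mixed vectors can sit in a null direction, and this is the generic difficulty, not a removable sub-case. What the paper does is quite specific: at a point admitting planes $V_0^2\subset T_{x_0}M_0$, $V_1^2\subset T_{x_1}M_1$ with $K_0+K_1\neq 0$, it restricts to $V_0^2\oplus V_1^2$ and augments $\alpha^F$ by two auxiliary symmetric forms built from the curvature values, setting $T=(\alpha^F+B_0\,F)\oplus B_1$ with values in $W^{3,1}=T_F^{\perp}M\oplus\R$. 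This $T$ is genuinely flat (not merely flat with respect to the splitting) and has trivial nullity because $\interno{T(X,Y)}{F}=-\interno{X}{Y}$; then Moore's results in \cite{Moore1977} (Theorem 2, Corollaries 1 and 2) apply, the degenerate case $\mathcal{S}(T)\cap\mathcal{S}(T)^{\perp}=\spn\{\nu\}$ being handled by passing to $T'=T-\interno{\cdot}{\cdot}\nu$, whose span is Riemannian and whose nullity, of dimension at least $2$, is identified with one of the $V_i^2$ using the structure of $B_1$; a perturbation argument then extends adaptedness from the chosen planes to all of $T_{x_0}M_0\times T_{x_1}M_1$ when $n_i>2$. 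None of this appears in your sketch, and the inequality $K_0+K_1\neq 0$ --- which is what makes $B_1$ well defined and the nullity analysis close --- plays no role in it.

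Two further misplacements in the logic. The hypothesis that $M^n$ is nowhere conformally flat is used \emph{before} the algebra, not to discard a degenerate outcome of it: if on an open set every pair of planes satisfied $K_0+K_1=0$, then both factors would have constant opposite curvature there and $M^n$ would be conformally flat; hence the points admitting planes with $K_0+K_1\neq 0$ are dense, adaptedness holds on a dense set, and then everywhere by continuity. Moreover, the dichotomy (i)/(ii) of the theorem, including ``one factor is flat'' and ``one factor has constant nonzero curvature,'' is not read off from the null-direction analysis of the flat form (in the paper both the nondegenerate and degenerate cases yield adaptedness); it comes from Theorem \ref{tojeiroRham} together with the codimension-one count $s_0+s_1=1$, which forces some $s_i=0$. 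Finally, Tojeiro's theorem requires only that $\alpha^f$ be adapted to the product net; the Codazzi/parallel-normal-subbundle step you interpose is not needed for its application.
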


When the number of factors increases, the existence of the immersion in the minimal codimension imposes strong restrictions to the manifold $M^n$ (See Theorem \ref{mainMinimal} in Section \ref{remarks}). Thereby, we had to add a hypothesis to obtain a more interesting theorem. Namely,

\begin{main}\label{mainKfactors}
	Let $f\colon M^n \rightarrow \R^{n+k}$ be a conformal immersion of a Riemannian product manifold $M^n =\prod_{i=0}^{k} M^{n_i}_i $, where $n =\sum_{i=0}^{k} n_i$ and $n_i \geq 2$ for all $0 \leq i \leq k$. If the subset of points of $M^{n_i}_i$ at which it is flat has empty interior for all $1 \leq i \leq k$, and at all points in $M^{n_0}_0$ there is an plane with vanishing sectional curvature, then one of the following holds
	\begin{enumerate}[(i)]
		\item There exist an extrinsic product $\tilde{f}\colon M^n \rightarrow \R^{n+k}$ of isometric immersions, a homothety $H$ of $\R^{n+k}$ and an inversion $I$ in $\R^{n+k}$ with respect to a sphere of unit radius such that
		$$f = I \circ H \circ {\tilde{f}}.$$ In particular, $M_0^{n_0}$ is lat.
		\item There are isometric immersions $f_{i_0}\colon M^{n_{i_0}}_{i_0} \rightarrow \QH^{n_{i_0}+s_{i_0}}_{-c}$ and $f_i\colon M^{n_i}_i \rightarrow \mathbb{S}^{n_i+s_i}_{c_i}$, for $i \in \{0,\cdots k\} \setminus \{i_0\}$, such that $\sum_{i=0}^{k}s_i = 1$, $\sum_{i=0, i \neq i_0}^{k} \frac{1}{c^2_i} = \frac{1}{c^2}$ and
		$$f = \Theta \circ \left(f_0 \times \tilde{f}\right),$$ where $\Theta \colon \QH^{n_{i_0}+s_{i_0}}_{-c} \times \mathbb{S}^{n+k-n_{i_0}}_{c} \rightarrow \R^{n+k}$ is a conformal representation, and $\tilde{f}\colon \prod_{i=0 , i\neq i_0}^{k} M^{n_i}_i \rightarrow \mathbb{S}^{n-n_{i_0}-s_{i_0} +k}_c$ is an extrinsic product. In particular, $M^{n_i}_i$ has constant and nonzero curvature, for all $1 \leq i \leq k$.
	\end{enumerate}
\end{main}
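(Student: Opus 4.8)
The plan is to transplant Moore's method \cite{Moore1971} to the conformal category via the light-cone model, to reduce the problem to an assertion about flat bilinear forms, and then to feed the resulting splitting into the conformal De Rham/Nölker-type decomposition theorem of Tojeiro \cite{tojeiroConfNolk}.

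First I would pass to the isometric picture. Composing $f$ with the standard conformal embedding $\Psi$ of $\R^{n+k}$ into the light cone $\mathbb{V}^{n+k+1}\subset\mathbb{L}^{n+k+2}$ and rescaling by the conformal factor of $f$ produces an isometric immersion $\mathcal F\colon M^n\to\mathbb{L}^{n+k+2}$ whose image lies in $\mathbb{V}^{n+k+1}$. Its normal bundle is Lorentzian of rank $k+2$; it contains the lightlike position field $\mathcal F$ together with a complementary lightlike direction $w$ built from the conformal factor of $f$ and its gradient, while $\spn\{\mathcal F,w\}^{\perp}$ inside the normal bundle is a rank-$k$ Riemannian subbundle playing the role of the normal bundle of $f$. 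Since $\mathcal F$ is isometric for the product metric of $M^n$, it satisfies the ordinary Gauss equation, and the curvature tensor of a Riemannian product annihilates every mixed pair, so $R(X_i,X_j)=0$ whenever $X_i\in TM_i$, $X_j\in TM_j$ with $i\neq j$. Consequently, for each such pair of factors the restriction $\alpha_{\mathcal F}|_{TM_i\times TM_j}$ is a flat bilinear form with values in the rank-$(k+2)$ Lorentzian normal space.

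The algebraic heart of the proof is the analysis of these flat bilinear forms. Applying Moore's lemma in its version for Lorentzian targets, together with the hypotheses $n_i\geq 2$ and the assumption that every point of $M_0^{n_0}$ carries a plane of vanishing curvature, I would show that the spacelike part $\alpha_{\mathcal F}^{s}$ of $\alpha_{\mathcal F}$ is adapted to the product: $\alpha_{\mathcal F}^{s}(X_i,X_j)=0$ for $i\neq j$, the first normal subbundles $N_i=\spn\,\alpha_{\mathcal F}^{s}(TM_i,TM_i)$ are mutually orthogonal and parallel in the normal connection, and the lightlike data $\mathcal F,w$ — equivalently the conformal factor of $f$ — decompose compatibly with the factors, as required by the decomposition theorem. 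Distributing the $k$ available spacelike normal directions among the $k+1$ factors accounts for the nonnegative integers $s_i$ with $\sum_i s_i=1$, and the special factor $M_0^{n_0}$ is precisely the one forced to absorb the degeneracy created by its plane of vanishing curvature. One works first on the open dense subset where all relevant bundle ranks are locally constant and then extends by a connectedness argument.

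With the splitting established, Tojeiro's conformal Nölker-type theorem turns $f$ into a conformal extrinsic product, and the dichotomy is read off from the nature of the remaining lightlike direction. If it is of cylinder type, $f$ differs from a genuine extrinsic product $\tilde f\colon M^n\to\R^{n+k}$ by a homothety followed by an inversion, and tracking the flat normal directions forces $M_0^{n_0}$ to be flat, which is case (i). Otherwise $f=\Theta\circ(f_0\times\tilde f)$ with $f_0\colon M_{i_0}^{n_{i_0}}\to\QH^{n_{i_0}+s_{i_0}}_{-c}$, with $\tilde f$ an extrinsic product into a round sphere $\mathbb{S}^{n+k-n_{i_0}}_c$ and $\Theta$ a conformal representation; the standard description of a product of round spheres sitting inside a round sphere yields $\sum_{i\neq i_0}1/c_i^2=1/c^2$ and shows each $M_i^{n_i}$ with $i\geq 1$ has constant nonzero curvature, which is case (ii). The main obstacle I anticipate is exactly the algebraic step: running the flat-bilinear-form machinery in an indefinite normal bundle while simultaneously handling the exceptional factor $M_0$, whose curvature is only partially degenerate, and checking that the surviving lightlike directions encoding the conformal factor are arranged precisely as the hypotheses of the conformal decomposition theorem demand; a lesser difficulty is the bookkeeping that identifies the distinguished factor $i_0$ and produces the curvature relation $\sum_{i\neq i_0}1/c_i^2=1/c^2$.
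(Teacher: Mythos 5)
Your overall architecture (pass to the light-cone representative $F$, prove that the second fundamental form is adapted to the product net, then invoke Tojeiro's conformal decomposition theorem) is the same as the paper's, but the algebraic core --- the actual proof of adaptedness --- is missing, and the mechanism you sketch for it would not work as stated. You propose to view the mixed restrictions $\alpha^{F}|_{TM_i\times TM_j}$ as flat bilinear forms into the rank-$(k+2)$ Lorentzian normal bundle and to ``apply Moore's lemma in its version for Lorentzian targets''. For an indefinite target, flatness plus a dimension count does not force such a form to vanish, nor even to have nullity: it may take its values in an isotropic line, and in the conformal setting this is not a remote possibility but the generic difficulty, since $\alpha^{F}$ always contains the terms $\varphi\,\mathrm{Hess}\,\varphi^{-1}(X,Y)F-\interno{X}{Y}\zeta$, which live precisely in the null plane $\spn\{F,\zeta\}$. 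Moreover, the hypotheses that each $M_i^{n_i}$, $1\le i\le k$, is non-flat and that $M_0^{n_0}$ carries a plane of vanishing curvature never enter your outline in any concrete algebraic way, whereas they are exactly what makes the dimension count close. The paper's Proposition \ref{propkfactors} instead works with the \emph{full} second fundamental form on a distinguished $(2k+2)$-dimensional subspace $V=V_0^2\times\cdots\times V_k^2$ (one plane per factor, the flat one in $M_0$), augmented by auxiliary symmetric forms $B_i$ built from $\sqrt{|K_i|}$: the form $T=\alpha^F\oplus(B_1,\dots,B_k)$ is flat with values in $W^{2k+1,1}=T_F^\perp M\oplus\R^k$ precisely because the $B_i$ cancel the intrinsic curvature in the Gauss equation of $F$, and $K_0=0$ is what allows the target to have dimension exactly $2k+2$ with index $1$ so that Moore's diagonalization results apply; $\mathcal{N}(T)=\{0\}$ comes from $\interno{T(X,Y)}{F}=-\interno{X}{Y}$. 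Adaptedness is then read off from the diagonalizing basis via the nullities of the $B_i$ (nondegenerate case), or, when $\mathcal{S}(T)$ is degenerate, by passing to $T'=T-\interno{\cdot}{\cdot}\nu$, whose span is Riemannian, and identifying $\mathcal{N}(T')$ with one of the planes $V_{j_0}^2$. None of this machinery, which is where the curvature hypotheses do their work, appears in your proposal.

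Two further points. First, the passage from the chosen planes to the whole tangent spaces when $n_i>2$ is a genuine additional step: unlike the two-factor case, the existence of a flat plane in $M_0$ is not an open condition, so a naive perturbation argument is unavailable; the paper needs the extra Gauss-equation analysis with the subspaces $W_0=\spn\set{\alpha^F(e_{2j},e_{2j})}{0\le j\le k}$ and $W_1$, together with the relation \eqref{perp}, to kill all remaining mixed components. Your ``work on an open dense set where ranks are constant and extend by connectedness'' does not address this. Second, Tojeiro's Theorem \ref{tojeiroRham}, as used here, requires only that $\alpha^f$ be adapted to the product net; the orthogonality and parallelism of first normal subbundles that you list as goals are not needed, so that part of your plan is superfluous rather than wrong.
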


\section{Preliminaries}\label{prelim}

We will establish some notations and give some definitions used in the previous works in this subject. We will use a decomposition theorem due to Tojeiro, an algebraic tool called flat bilinear forms due to Moore and the light-cone representative of conformal immersions.

\subsection{Extrinsic decomposition of conformal immersions}

Lets consider an isometric (conformal) immersion of Riemannian products into the Euclidean space $f\colon M^n = \prod_{i=0}^{k} M^{n_i}_i \rightarrow \R^{n+p}$, with $n = \sum_{i=0}^{k} n_i$. We say that the second fundamental form $\alpha^f$ of $f$ is adapted to the product net of $M^n$ if $\alpha^f(X_i, X_j) = 0,$ for all $X_i \in TM_i$ such that $i\neq j.$ The notion of adapted can be extend to a decomposition of the tangent space, depending on the structure of this decomposition we will have warped products, or more general structures as the partial tubes (see \cite{tojeiroRhamconf}, \cite{partialTubes} and \cite{DaTo2018} for more details). 

We say that $f$ is an extrinsic product if $f = f_0 \times \cdots \times f_k$, where $f_i\colon M^{n_i}_i \rightarrow \R^{n_i+p_i}$ are isometric immersions and $\sum_{i=0}^{k}p_i = p$.
For an isometric immersion in the sphere, $g\colon M^n = \prod_{i=0}^{k} M^{n_i}_i \rightarrow \mathbb{S}^{n+p}_{c}$, $n = \sum_{i=0}^{k} n_i$, we also will use the term extrinsic product when $g = h \circ (g_1 \times \cdots \times g_k)$, where $g_i\colon M^{n_i}_i \rightarrow \mathbb{S}^{n_i+s_i}_{c_i}$ are isometric immersions and $h\colon \prod_{i=1}^{k}\mathbb{S}^{n_i+s_i}_{c_i} \rightarrow \mathbb{S}^{n+p}$ is the natural inclusion such that $\sum_{i=0}^{k}\frac{1}{c^2_i} = \frac{1}{c^2}$ and $\sum_{i=0}^{k}s_i = p - k + 1$.

We can now state the decomposition theorem that we will make use in this work:

\begin{thm}[Tojeiro \cite{tojeiroConfNolk}]\label{tojeiroRham}
	Let $f\colon M^n \rightarrow \R^{n+p}$, $n \geq 3$, be a conformal immersion of a Riemannian product manifold $M^n =\prod_{i=0}^{k} M^{n_i}_i $, where $n =\sum_{i=0}^{k} n_i$. If the second fundamental form of $f$ is adapted to the product net of $M^n$, then one of the following possibilities holds:
	\begin{enumerate}[(i)]
		\item There exist an extrinsic product $\tilde f\colon M^n \rightarrow \R^{n+p}$ of isometric immersions, a homothety $H$ of $\R^{n+p}$ and an inversion $I$ in $\R^{n+p}$ with respect to a unit sphere such that
		$$f = I \circ H \circ \tilde f.$$
		\item After possibly rearranging the factors, there is an isometric immersion $f_0\colon M^{n_0}_0 \rightarrow \QH^{n_0+s_0}_{-c}$ and isometric immersions  $f_i\colon M^{n_i}_i \rightarrow \mathbb{S}^{n_i+s_i}_{c_i}$, $1 \leq i \leq k$, such that $\sum_{i=0}^{k}s_i = p-k+1$, $\sum_{i=1}^{k} \frac{1}{c^2_i} = \frac{1}{c^2}$ and
		$$f = \Theta \circ \left(f_0 \times \tilde f\right),$$
		where $\Theta\colon \QH^{n_0+s_0}_{-c}\times \mathbb{S}^{n+p-n_0-s_0}_{c}$ is a conformal representation, and $\tilde{f}= f_1 \times \cdots \times f_k \colon \prod_{i=0}^{k} M^{n_i}_i \to \prod_{i=1}^k \mathbb{S}_{c_i}^{n_i+s_i} \subset \mathbb{S}_c^{n+p-n_0-s_0}$ is an extrinsic product.
	\end{enumerate}
\end{thm}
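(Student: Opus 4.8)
The plan is to reduce the conformal problem to an isometric one by passing to the light-cone model. Let $\mathbb{L}^{n+p+2}$ denote the Lorentz space and $\mathbb{V}^{n+p+1}\subset\mathbb{L}^{n+p+2}$ its light cone, and recall the isometric embedding $\Psi$ of $\R^{n+p}$ as a section of $\mathbb{V}^{n+p+1}$ whose image carries the flat metric. Writing the conformal factor of $f$ as $e^{\varphi}$, I would form the \emph{isometric light-cone representative} $\mathcal{I}=e^{-\varphi}\,\Psi\circ f\colon M^n\to\mathbb{V}^{n+p+1}$, which is an isometric immersion of $M^n$ into the flat Lorentzian ambient $\mathbb{L}^{n+p+2}$. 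First I would record how the second fundamental form $\alpha^{\mathcal I}$ of $\mathcal{I}$ (taken in $\mathbb{L}^{n+p+2}$) decomposes in terms of $\alpha^f$, the position vector of $\mathcal{I}$, and $\operatorname{Hess}\varphi$. The genuine content of this step is that adaptedness of $\alpha^f$ to the product net, combined with the Gauss and Codazzi equations for the Riemannian product, forces $\operatorname{Hess}\varphi$ to be adapted as well (equivalently $\varphi$ splits along the factors up to the usual ambiguity), so that $\alpha^{\mathcal I}$ is adapted to the product net of $M^n$.

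Next I would apply a Moore--N\"olker-type De Rham decomposition theorem for isometric immersions with adapted second fundamental form, now in the flat pseudo-Euclidean ambient. Because the product net of $M^n$ is parallel and $\alpha^{\mathcal I}$ is adapted, the immersion $\mathcal{I}$ is an extrinsic product: the ambient splits orthogonally as $\mathbb{L}^{n+p+2}=L_0\oplus\cdots\oplus L_k$ and $\mathcal{I}=\mathcal{I}_0\times\cdots\times\mathcal{I}_k$, where each $\mathcal{I}_i$ depends only on $M_i^{n_i}$ and takes values in $L_i$. Imposing the light-cone constraint $\interno{\mathcal{I}}{\mathcal{I}}=0$ together with the isometry condition shows that each $\mathcal{I}_i$ lands in a pseudo-sphere of $L_i$ whose signed squared radii sum to zero. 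A signature count is the key bookkeeping: the index-one ambient forces exactly one factor, which after relabeling we take to be $L_0$, to be Lorentzian, so $\mathcal{I}_0$ maps $M_0^{n_0}$ into a hyperbolic space $\QH^{n_0+s_0}_{-c}$, while the remaining $L_i$ are Riemannian and $\mathcal{I}_i$ maps $M_i^{n_i}$ into a round sphere $\mathbb{S}^{n_i+s_i}_{c_i}$; the radius identity then yields $\sum_{i=1}^{k}\frac{1}{c_i^2}=\frac{1}{c^2}$ and the dimension count $\sum_{i=0}^{k}s_i=p-k+1$.

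Finally I would translate the splitting back to $\R^{n+p}$ and read off the dichotomy. The distinction between (i) and (ii) comes from the causal position of the vertex of $\mathbb{V}^{n+p+1}$ relative to the orthogonal decomposition, equivalently from whether the constant $c$ obtained above is finite and nonzero or arises as a degenerate limit. When the decomposition is nondegenerate one composes the product $f_0\times(f_1\times\cdots\times f_k)$ with the conformal representation $\Theta\colon\QH^{n_0+s_0}_{-c}\times\mathbb{S}^{n+p-n_0-s_0}_{c}\to\R^{n+p}$, giving case (ii); the degenerate case is precisely when the hyperbolic and spherical factors collapse to a flat extrinsic product $\tilde f$ in $\R^{n+p}$ seen after a homothety $H$ and an inversion $I$, which is case (i).

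The main obstacle I expect is the first step: showing that adaptedness of $\alpha^f$ propagates to $\alpha^{\mathcal I}$, that is, controlling $\operatorname{Hess}\varphi$. This is where the conformal, rather than isometric, nature of $f$ really enters, and it requires extracting from the compatibility equations that the mixed second derivatives of $\varphi$ across different factors vanish. A secondary but purely organisational difficulty is the signature accounting in the second step --- verifying that the light-cone constraint admits exactly one Lorentzian block and that the resulting space-form radii satisfy the stated relations, together with correctly identifying which configuration yields the degenerate Euclidean-product-plus-inversion case (i) versus the generic case (ii).
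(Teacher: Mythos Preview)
The paper does not prove this theorem; it is quoted from Tojeiro \cite{tojeiroConfNolk} in the Preliminaries and used as a black box. There is therefore nothing in the paper to compare your proposal against.

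That said, your outline is broadly the right strategy and is in the spirit of how such conformal De~Rham-type results are proved: pass to the isometric light-cone representative, invoke a Moore/N\"olker extrinsic decomposition in the flat Lorentzian ambient, and then read off the dichotomy from the signature of the orthogonal splitting of $\mathbb{L}^{n+p+2}$ under the light-cone constraint. The obstacle you flag is indeed the crux. From the formula \eqref{secondF}, if $X\in TM_i$ and $Y\in TM_j$ with $i\neq j$, then adaptedness of $\alpha^f$ and orthogonality of the factors kill two of the three terms, leaving $\alpha^F(X,Y)=\varphi\,\mathrm{Hess}\,\varphi^{-1}(X,Y)\,F$; so the whole argument hinges on showing the mixed Hessian of the conformal factor vanishes. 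Be aware that this Hessian is computed in the metric $\interno{\cdot}{\cdot}_f$ induced by $f$, not in the product metric, so ``$\varphi$ splits along the factors'' is not quite the right formulation and the step is a bit more delicate than your sketch suggests. (The present paper only ever uses the trivial converse implication --- adaptedness of $\alpha^F$ forces adaptedness of $\alpha^f$ --- which is immediate from \eqref{secondF}.)
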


\subsection{Flat bilinear forms}

Following the ideas of the article \cite{Moore1971}, we will use some results about \textit{flat} bilienar forms, which we will define next. Let $W^{p,1}$ be a Lorentzian vector space of dimension $p+1$ endowed with a indefinite inner product $\interno{\cdot}{\cdot}$ with index $1$. Let $V$ be a finite dimensional vector space. A bilinear form $\beta\colon V \times V \rightarrow W^{p,1}$ is said to be \textit{flat} if  
\begin{equation}\label{defFlat}
	\interno{\beta(W,X)}{\beta(Y,Z)} - \interno{\beta(Y,X)}{\beta(W,Z)} = 0,
\end{equation}
for all $W,X,Y,Z \in V$. Flat bilinear forms were introduced by Moore in \cite{Moore1977} to study isometric immersions of the round sphere in Euclidean space in low codimension, and was used afterwards in several papers about isometric rigidity (see \cite{DFGenuines} and \cite{DaTo2018} for more information).

Denote the \textit{nullity} of $\beta$ by $\mathcal{N}(\beta) = \set{X \in V}{\beta(X,Y)=0 \ \text{for all} \ Y \in V}$ and the span of $\beta$ by $\mathcal{S}(\beta) = \spn\set{\beta(X,Y)}{X,Y \in V}$.

\subsection{The light-cone representative}\label{lightCone}

Let $\mathbb{L}^{n+2}$ be the $(n+2)$-dimensional Minkowski space endowed with a Lorentz scalar product of signature $(+,\cdots,+,-)$, and let $\mathbb{V}^{n+1} = \set{x \in \mathbb{L}^{n+2}}{\interno{x}{x} = 0}$ denote the light cone in $\mathbb{L}^{n+2}$. Then $\mathbb{E}^{n}= \mathbb{E}^{n}_w = \set{x \in \mathbb{V}^{n+1}}{\interno{x}{w}=1}$ is a model of $n$-dimensional Euclidean space for any $w \in \mathbb{V}^{n+1}$. Namely, choose $x_0 \in \mathbb{E}^{n}$ and a linear isometry $A \colon \R^n \rightarrow \spn\{x_0,w\}^\bot$. Then the triple $(x_0,w,A)$ gives rise to an isometry $\Psi = \Psi_{x_0,w,A}: \R^n \rightarrow  \mathbb{E}^n$ defined by $\Psi(x) = x_0 + A(x) - \frac{1}{2} \| x \|^2 w$. In view of this, any conformal immersion $f\colon M^n \rightarrow \R^{n+p}$ with conformal factor $\varphi \in C^{\infty}(M)$ gives rise to an isometric immersion $F\colon M^n \rightarrow \mathbb{V}^{n+p+1}$ given by $F = \frac{1}{\varphi} \Psi \circ f.$ The isometric immersion $F$ is called the light cone representative of $f$.

In this work we will make use the expression of the second fundamental form of $F$. The normal bundle $T_F^\bot M$ of $F$ decomposes orthogonally as $T^\bot_FM = d\Psi(T^\bot_fM) \oplus \mathbb{L}^2$, where $\mathbb{L}^2$ is the Lorentzian plane bundle spanned by the position vector $F$ and the vector $\zeta = \varphi w - d(\Psi \circ f)(\text{grad} \varphi^{-1})$. The second fundamental form of $F$ splits accordingly as
\begin{equation}\label{secondF}
	\alpha^F(X,Y) =  \varphi^{-1}d\Psi(\alpha^f(X,Y)) + \varphi \text{Hess}\varphi^{-1}(X,Y)F - \interno{X}{Y}\zeta,
\end{equation}
where Hess and grad are calculated with respect to the metric $\interno{\cdot}{\cdot}_f$ induced by $f$. Note that $\| \zeta \| = 0$ and $\interno{F}{\zeta} = 1$. It is also important to point out that if $\alpha^F$ is adapted, then $\alpha^f$ is also adapted. We refer the reader to \cite{hj} or \cite{DaTo2018} for further details.

\section{Proofs}

Lets suppose that $U_0\times U_1 \subset M_0^{n_0} \times M_1^{n_1}$ is an open subset where for every $x =(x_0,x_1) \in U_0\times U_1$ and for every pair of planes $V_0^2 \subset T_{x_0} M_0$ and $V_1^2 \subset T_{x_1} M_1$, the sum of the curvatures of $V_0$ and $V_1$ is zero, that is $K_0 + K_1 = 0$, where $K_i$ is the sectional curvature of $V_i^2$. Then, fixing $x_0 \in U_0$ and varying $x_1$ we get that the $U_1$ has constant curvature $K_1$, and analogously $U_0$ has constant curvature $K_0$. In particular $U_0 \times U_1$ is conformally flat, since $U_0$ and $U_1$ have constant and opposite curvatures. Therefore the set of points $x \in M_0\times M_1$ where there are planes $V_i^2 \subset T_{x_i} M_i$ with $K_0+K_1 \ne 0$ is open and dense. Thus Theorem A follows directly from Theorem \ref{tojeiroRham} and the following proposition.

\begin{prop}\label{prop2factors}
	Let $f \colon M^n \rightarrow \R^{n+1}$ be an conformal immersion of a Riemannian product $M^n = M^{n_0}_0 \times M^{n_1}_1$, with $n_1, n_2 \geq 2$. If there are planes $V_0^2 \subset T_{x_0} M_0$ and $V_1^2 \subset T_{x_1}M_1$ such that $K_0 + K_1 \ne 0$, where $K_i$ is the sectional curvature of $V_i$, then $\alpha^f$ is adapted to the product net of $M^n$ at $x = (x_0, x_1)$.
\end{prop}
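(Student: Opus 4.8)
\emph{Proof plan.}\ First I would pass to the light-cone representative: writing $\varphi \in C^\infty(M)$ for the conformal factor of $f$, put $F = \tfrac{1}{\varphi}\,\Psi \circ f$, which is an \emph{isometric} immersion into the flat space $\mathbb{L}^{n+3}$ whose normal bundle $T^\perp_F M = d\Psi(T^\perp_f M) \oplus \mathbb{L}^2$ has rank $3$ and Lorentzian signature $(2,1)$. Since $\alpha^F$ adapted implies $\alpha^f$ adapted, it suffices to prove that $\alpha^F$ is adapted at $x$. Writing $A = \interno{\alpha^f}{N}$ for the scalar second fundamental form of $f$ (with $N$ a local unit normal) and $\eta = d\Psi(N)$, identity \eqref{secondF} together with $X_0 \perp Y_1$ gives $\alpha^F(X_0, Y_1) = \varphi^{-1} A(X_0, Y_1)\,\eta + \varphi\,\mathrm{Hess}\,\varphi^{-1}(X_0, Y_1)\,F$ for $X_0 \in T_{x_0}M_0$, $Y_1 \in T_{x_1}M_1$; so the goal becomes to show that $A$ and $\mathrm{Hess}\,\varphi^{-1}$ both vanish on $T_{x_0}M_0 \times T_{x_1}M_1$.

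Because $M^n$ is a Riemannian product, $R(X_0, X_0')Y_1 = 0$ whenever $X_0, X_0' \in T_{x_0}M_0$ and $Y_1 \in T_{x_1}M_1$, so feeding this into the Gauss equation of $F$ (the ambient space being flat) shows that $\beta := \alpha^F_x\big|_{T_{x_0}M_0 \times T_{x_1}M_1}$ is a flat bilinear form. As $\mathcal{S}(\beta) \subset \spn\{\eta, F\}$ and $\interno{\eta}{\eta} = 1$, $\interno{\eta}{F} = \interno{F}{F} = 0$, flatness of $\beta$ is equivalent to $A(X_0, Y_1)A(X_0', Y_1') = A(X_0, Y_1')A(X_0', Y_1)$, i.e.\ the mixed block of $A$ has rank at most one; hence either it vanishes, or $A(X_0, Y_1) = \varphi(x)\,\lambda(X_0)\mu(Y_1)$ for some $\lambda \in T^*_{x_0}M_0$ and $\mu \in T^*_{x_1}M_1$. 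If $\lambda = 0$ or $\mu = 0$, then $A$ vanishes on $T_{x_0}M_0 \times T_{x_1}M_1$, and applying the Gauss equation to $R(X_0, X_0')Y_1 = 0$ paired with $W_0 \in T_{x_0}M_0$, together with \eqref{secondF}, yields $\mathrm{Hess}\,\varphi^{-1}(X_0, Y_1)\interno{X_0'}{W_0} = \mathrm{Hess}\,\varphi^{-1}(X_0', Y_1)\interno{X_0}{W_0}$; taking $X_0 \perp X_0'$ unit (possible since $n_0 \ge 2$) and $W_0 = X_0$ forces $\mathrm{Hess}\,\varphi^{-1}(X_0', Y_1) = 0$, so $\alpha^F(X_0', Y_1) = 0$ and $\alpha^F$ is adapted at $x$.

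It remains to rule out the case $\lambda \neq 0$ and $\mu \neq 0$, and here I would show it forces $K_0 + K_1 = 0$ for \emph{every} pair of planes $V_0^2 \subset T_{x_0}M_0$, $V_1^2 \subset T_{x_1}M_1$, contradicting the hypothesis (this step needs no Codazzi equations). The idea is to bootstrap three batches of Gauss equations of $F$. (a) From $R(X_0, X_0')Y_1 = 0$ paired with $W_0 \in T_{x_0}M_0$ one expresses the mixed components of $\mathrm{Hess}\,\varphi^{-1}$ in terms of $A|_{T_{x_0}M_0}$, $\lambda$ and $\mu$; comparing with the $M_1$-analogue and using the symmetry of the Hessian produces an identity which, after varying frames, forces the diagonal blocks into the normal form $A|_{T_{x_0}M_0} = \kappa\,\interno{\cdot}{\cdot} + \tau\,\lambda \otimes \lambda$ and $A|_{T_{x_1}M_1} = \kappa\,\interno{\cdot}{\cdot} + \tau'\,\mu \otimes \mu$, with one and the same constant $\kappa$. (b) The vanishing of the sectional curvature of every mixed plane reads $\varphi^{-2}\bigl(A(X_0, X_0)A(Y_1, Y_1) - A(X_0, Y_1)^2\bigr) = \varphi\bigl(\mathrm{Hess}\,\varphi^{-1}(X_0, X_0) + \mathrm{Hess}\,\varphi^{-1}(Y_1, Y_1)\bigr)$ for unit $X_0, Y_1$; inserting (a) and requiring the right-hand side to split as a function of $X_0$ plus a function of $Y_1$ forces $\tau\tau' = \varphi(x)^2$ and determines the diagonal blocks of $\mathrm{Hess}\,\varphi^{-1}$ up to constants $a_0, a_1$ with $a_0 + a_1 = \varphi(x)^{-3}\kappa^2$. (c) Substituting (a) and (b) into the Gauss expression for the sectional curvature of an arbitrary plane $V_0^2$ makes the $\lambda$-dependent terms cancel, so $K_0 = \varphi(x)^{-2}\kappa^2 - 2\varphi(x)a_0$ regardless of $V_0^2$, and likewise $K_1 = \varphi(x)^{-2}\kappa^2 - 2\varphi(x)a_1$; therefore $K_0 + K_1 = 2\varphi(x)^{-2}\kappa^2 - 2\varphi(x)(a_0 + a_1) = 0$, the desired contradiction.

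The step I expect to be the main obstacle is exactly the bootstrap (a)--(c): converting the Gauss equations of the light-cone representative into the rigid normal form of the second fundamental form, and in particular reorganising (a) in the low-dimensional cases $n_0 = 2$ or $n_1 = 2$, where a unit vector cannot be varied inside $\ker\lambda$ (resp.\ $\ker\mu$) and the normal form must instead be extracted from the mixed-plane equations directly.
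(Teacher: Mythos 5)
Your first two steps are sound: passing to the light-cone representative, observing via the Gauss equation of $F$ and $R(X_0,X_0')Y_1=0$ that the mixed block of $\alpha^F$ is a flat bilinear form with values in $\spn\{\eta,F\}$, hence that the mixed block of $A$ has rank at most one; and, in the vanishing case, the orthogonality trick with $n_0\geq 2$ that kills $\mathrm{Hess}\,\varphi^{-1}(X_0,Y_1)$ is correct. But none of this uses the hypothesis $K_0+K_1\neq 0$, and it cannot suffice: non-adapted conformal hypersurface immersions of products do exist precisely when the factors have constant opposite curvatures. So the entire content of the proposition sits in your rank-one case $\lambda\neq 0\neq\mu$, and there your argument is only an announced bootstrap. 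The normal forms in (a) (that the diagonal blocks of $A$ are $\kappa\interno{\cdot}{\cdot}+\tau\,\lambda\otimes\lambda$ and $\kappa\interno{\cdot}{\cdot}+\tau'\,\mu\otimes\mu$ with the same $\kappa$), the splitting and the relation $\tau\tau'=\varphi^2$ in (b), and the plane-independence of $K_0,K_1$ in (c) are nontrivial claims extracted from a large system of mixed Gauss equations; you assert their conclusions without deriving them, and you yourself flag this as the main obstacle, in particular when $n_0=2$ or $n_1=2$ --- which is exactly the case $n_0=n_1=2$ that the proposition must cover and that the paper treats first. As it stands this is a genuine gap: the contradiction with $K_0+K_1\neq 0$ is never actually reached.

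For comparison, the paper avoids this computation altogether by an algebraic device: it augments $\alpha^F$ with curvature data, setting $T=(\alpha^F+B_0(\cdot,\cdot)F)\oplus B_1$ with values in $W^{3,1}=T_F^\perp M\oplus\R$, where $B_0,B_1$ are built from $K_0$ and $K_0+K_1$ so that $T$ is flat and $\mathcal{N}(T)=\{0\}$ (since $\interno{T(X,Y)}{F}=-\interno{X}{Y}$). The hypothesis $K_0+K_1\neq 0$ enters only through the definition of $B_1$, whose nullity is exactly $V_0^2$. Moore's structure results for flat bilinear forms into Lorentzian space (diagonalization when $\mathcal{S}(T)$ is nondegenerate; the estimate $\dim\mathcal{N}(T')\geq 2$ for $T'=T-\interno{\cdot}{\cdot}\nu$ when it is degenerate) then force adaptedness of $T$, hence of $\alpha^F$ and $\alpha^f$, with no normal-form computations; a perturbation of planes handles $n_i>2$. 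If you want to salvage your route, you would have to carry out (a)--(c) in full at a single point, including the low-dimensional cases, which essentially amounts to reproving by hand what Moore's flat-bilinear-form theorems give for free.
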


\begin{proof}
	We first prove the result when $n_0 = n_1 = 2$. At $x \in M^n$, take an orthonormal basis $e_0,e_1,e_2, e_3$ of $T_xM$ such that $\spn \{e_{2i},e_{2i+1}\} = V^2_i = T_{x_i}M_i$ at $x_i \in M_i$ for $0 \leq i \leq 1$. Let $\omega^0,\omega^1,\omega^2, \omega^{3}$ denote the dual basis and $K_i$ the sectional curvature of $M_i^{n_i}$ at $x_i$, $0 \leq i \leq 1$, and $V^{4} = T_xM^4$. By the hypothesis we have that $K_0 + K_1 \neq 0$.
	
	Let $B_0, B_1 \colon V^{4} \times V^{4}  \to \R$ be the symmetric bilinear forms defined by
	\begin{align*}
		B_0 &= \frac{K_0}{2} \left(\omega^0 \otimes \omega^0 + \omega^1 \otimes \omega^1 - \omega^2 \otimes \omega^2 - \omega^3 \otimes \omega^3 \right); \\
		B_1 &=	\begin{cases}
			\sqrt{-(K_0 + K_1)} \left(\omega^2 \otimes \omega^2 + \omega^3 \otimes \omega^3 \right), & \text{if $K_0 + K_1<0$,}  \\
			\sqrt{K_0 + K_1} \left(\omega^2 \otimes \omega^2 - \omega^3 \otimes \omega^3 \right), & \text{if $K_0 + K_1>0$.}
		\end{cases}
	\end{align*}
	
	Let $F \colon M^n \to \mathbb{V}^{n+2} \subset \mathbb{L}^{n+3}$ be the light-cone representative of $f$ (briefly discussed in Section \ref{lightCone}), and let $\alpha^F$ be its second fundamental form, given by \eqref{secondF}. Consider $W^{3,1} = T_F^\perp M \oplus \R^1$, and the bilinear form $T\colon V^4 \times V^4 \to W^{3,1}$ given by
	$$T(X,Y) = \left(\alpha^F(X,Y) + B_0(X,Y)F \right) \oplus B_1(X,Y).$$	Since $\interno{T(X,Y)}{F} = - \interno{X}{Y}$ for all $X,Y \in V^{4}$, the nullity of $T$ is trivial, that is, $\mathcal{N}(T) = \{0\}$. It is easy to check that $T$ is flat, we only need to verify \eqref{defFlat} for elements in the basis $\{e_0,e_1,e_2,e_3 \}$.
	
	The goal is to prove that $T$ is adapted to the product net of $M^n$ and by construction implies that $\alpha^F$ is also adapted, and as observed in Section \ref{lightCone} it follows that $\alpha^f$ will be also adapted. 
	
	Suppose that $\mathcal{S}(T)$ is nondegenerated, then Corollary 2 an Theorem 2 in \cite{Moore1977} there exists a basis $\{X_0,X_1,X_2,X_3\}$ of $V^{4}$ that diagonalizes $T$. We can easily check that the nulity of $B_1$ is spanned by a subset of this basis. Since this basis is orthogonal by $\interno{T(X_i,X_j)}{F} = - \interno{X_i}{X_j}$, for $0 \leq i,j \leq 3$, and	$$V^2_0 = \mathcal{N}(B_1), \,\,V^2_1 = {V^2_0}^\bot,$$
    it follows that $V^2_1$ is also spanned by a subset of this basis. Hence, we can arrange that $X_{2i}, X_{2i+1} \in V^2_i$ for $ 0 \leq i \leq 1$, and we conclude the proposition in this case. 
	
	Let us assume now that $\mathcal{S}(T)$ is degenerated, that is, $\mathcal{S}(T) \cap \mathcal{S}(T)^\bot = \spn \{\nu\}$ for some $\nu \neq 0 \in T^\bot_FM \oplus \R$. Since $F \notin \mathcal{S}(T)^\bot$, the vectors $\{\nu,F\}$ are linearly independent and by definition $\| \nu \| = \| F \| = 0$. Therefore we can assume that $\interno{\nu}{F} = -1$. Let $T'\colon V^4 \times V^4 \rightarrow W^{3,1}$ be the flat bilinear form given by
	$$T' = T - \interno{\cdot}{\cdot} \nu,$$
	note that $\mathcal{S}(T')$ is Riemannian by construction and $\dim \mathcal{N}(T') \geq 2$ by Corollary 1 in \cite{Moore1977}. Let $\nu = \nu_F \oplus a$ be the decomposition in $ T_F^\perp M\oplus \R$. Observe that the nullity of $B_1 - \interno{\cdot}{\cdot} a$	is a subset of $V^2_1$ if $a \neq 0$, and a subset of $V^2_0$ if $a = 0$. In particular, $\mathcal{N}(T')=V^2_i$ for some $i \in \{0,1\}$ and this implies that $T$ is adapted to the product net of $M^n$ and the proposition follows.
	
	To treat the general case, choose orthonormal vectors $e_0,e_1,e_2,e_3 \in T_xM$ such that $e_{2i},e_{2i+1}$ span a plane $V^2_i \subset T_{x_i}M_i$ and $K_0 + K_1 \neq 0$, where $K_i$ is the curvature of the planes $V^2_i$ at $x_i \in M_i^{n_i}$. So, by the previous arguments we have that $\alpha^F|_{V_0 \times V_1} \coloneqq \alpha^F|_{(V_0 \times V_1) \times (V_0 \times V_1)}$ is adapted. Let $v_i \in V_i(x_i) \subset T_{x_i}M_i$, define $V^2_i({v_i})$ as the plane spanned by $\{ e_{2i} + \epsilon v_i, e_{2i+1} \}$, where $\epsilon \in \R$ is small enough such that $K_{V^2_0({v_0})} + K_1 \neq 0$, $K_0 + K_{V^2_1(v_1)} \neq 0$ and $K_{V^2_0(v_0)} + K_{V^2_1(v_1)} \neq 0$. We can now use the same argument as before for the pairs of planes $\left\{V^2_0(v_0),V_1\right\}$, $\left\{V_0,V^2_1(v_1)\right\}$ and $\{V^2_0(v_0),V^2_1(v_1)\}$ at $x \in M^n$. This implies that $\alpha^F_{V_0^{v_0} \times V_1}$, $\alpha^F_{V_0 \times V_1^{v_1}}$ and $\alpha^F_{V_0^{v_0} \times V_1^{v_1}}$ are adapted in their respective domains and, in particular, $\alpha^F(v_0,v_1) = 0$. The proposition follows by the fact that $v_i$ was arbitrary for $0 \leq i \leq 1$.
\end{proof}

To prove Theorem \ref{mainKfactors} we will need an algebraic proposition as well. The following proposition is very similar to Proposition \ref{prop2factors}, the difference is that the additional hypothesis is not an open property. Therefore, we can not make an perturbation argument directly as in the above proposition.

\begin{prop}\label{propkfactors}
	Let $ f \colon M^n \to \R^{n+k}$ be a conformal immersion of a Riemannian product $M^n = \prod_{i=0}^k M_i^{n_i}$, with $n_i \geq 2$ for all $0 \leq i \leq k$. If $M^{n_0}_0$ admits a plane with vanishing sectional curvature at $x_0$ and each $M_i^{n_i}$ is not flat at $x_i$, for $1 \leq i \leq k$, then the second fundamental form of $f$ is adapted to the product net of $M^n$ at $x=(x_0, \cdots, x_k)$.
\end{prop}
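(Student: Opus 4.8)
\emph{Step 1 (a flat bilinear form).} The plan is to adapt the proof of Proposition~\ref{prop2factors}. Write $M'=\prod_{i=1}^{k}M_i^{n_i}$. Fix a plane $V_0\subset T_{x_0}M_0$ with sectional curvature $K_0=0$ and, for each $1\le i\le k$, a plane $V_i\subset T_{x_i}M_i$ with sectional curvature $K_i\neq0$ (such a $V_i$ exists because $M_i^{n_i}$ is not flat at $x_i$). Let $F\colon M^n\to\mathbb{V}^{n+k+1}\subset\mathbb{L}^{n+k+2}$ be the light-cone representative of $f$, which is an isometric immersion of $M^n$ with its product metric, and let $\alpha^F$ be given by \eqref{secondF}. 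Put $V=V_0\oplus\dots\oplus V_k\subset T_xM$, a $(2k+2)$-dimensional subspace on which the metric is positive definite, fix an orthonormal basis $e_0,\dots,e_{2k+1}$ with $\spn\{e_{2i},e_{2i+1}\}=V_i$, with dual basis $\omega^0,\dots,\omega^{2k+1}$, and for $1\le i\le k$ set
\[
 B_i=\begin{cases}
 \sqrt{K_i}\,\bigl(\omega^{2i}\otimes\omega^{2i}-\omega^{2i+1}\otimes\omega^{2i+1}\bigr), & K_i>0,\\[4pt]
 \sqrt{-K_i}\,\bigl(\omega^{2i}\otimes\omega^{2i}+\omega^{2i+1}\otimes\omega^{2i+1}\bigr), & K_i<0.
 \end{cases}
\]
Consider $W^{2k+1,1}=T_F^\perp M\oplus\R^{k}$ and $T\colon V\times V\to W^{2k+1,1}$ given by $T(X,Y)=\alpha^F(X,Y)\oplus B_1(X,Y)\oplus\dots\oplus B_k(X,Y)$; note that $\dim W^{2k+1,1}=2k+2=\dim V$. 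The decisive point, and where the flat-plane hypothesis enters, is that \emph{no correction term of the form ``$B_0\,F$'' is needed}: by the Gauss equation of $F$ in the flat ambient $\mathbb{L}^{n+k+2}$, flatness of $T$ reduces to checking on each $2$-plane $V_i$ that the product curvature tensor agrees with the $(0,4)$-tensor determined by $B_i$, which holds for $i\ge1$ by the choice of $B_i$ and for $i=0$ precisely because $K_0=0$. (Were $K_0\neq0$, one would have to add a $B_0$ absorbing the $V_0$-block, and for $k\ge2$ such a term produces uncancellable mixed contributions between $V_0$ and the $V_i$'s — so a flat plane in $M_0^{n_0}$ is exactly what keeps $T$ diagonal.) Finally $\langle T(X,Y),F\rangle=-\langle X,Y\rangle$, so $\mathcal N(T)=\{0\}$.

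\emph{Step 2 (the dichotomy).} From here I would argue as in Proposition~\ref{prop2factors}. If $\mathcal S(T)$ is nondegenerate, Corollary~2 and Theorem~2 of \cite{Moore1977} give a basis of $V$ diagonalizing $T$; it is orthogonal (because $\langle T(X_a,X_b),F\rangle=-\langle X_a,X_b\rangle$) and it diagonalizes each $B_i$, so $V_i=V\ominus\mathcal N(B_i)$ is spanned by a subset of the basis for $1\le i\le k$, and hence so is $V_0=V\ominus(V_1\oplus\dots\oplus V_k)$; the basis is therefore adapted to $V=\bigoplus_iV_i$, and $\alpha^F|_V$ is adapted. If $\mathcal S(T)$ is degenerate, then $\mathcal S(T)\cap\mathcal S(T)^\perp=\spn\{\nu\}$ with $\nu$ lightlike, and after normalizing $\langle\nu,F\rangle=-1$ and passing to $T'=T-\langle\cdot,\cdot\rangle\,\nu$ (flat, with Riemannian span), Corollary~1 of \cite{Moore1977} yields $\dim\mathcal N(T')\ge2$; writing $\nu=\nu_F\oplus(a_1,\dots,a_k)$, the nullity of $B_i-a_i\langle\cdot,\cdot\rangle$ lies in $V_i$ if $a_i\neq0$ and equals $V\ominus V_i$ if $a_i=0$, so $\mathcal N(T')$ is contained in — hence equals — one of the $V_i$, which gives $T(X,\cdot)=\langle X,\cdot\rangle\,\nu$ on that $V_i$ and so $\alpha^F(V_i,V_j)=0$ for $j\neq i$; restricting $T$ to $V\ominus V_i$ and iterating disposes of the remaining factors. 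Either way, $\alpha^F(V_i,V_j)=0$ for all $i\neq j$.

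\emph{Step 3 (freeing the non-flat factors).} For $1\le i\le k$ and an arbitrary $v_i\in T_{x_i}M_i$, the plane $V_i(v_i)=\spn\{e_{2i}+\epsilon v_i,\,e_{2i+1}\}$ still has nonzero sectional curvature for $\epsilon$ small — an open condition — so Steps 1--2 apply with $V_i$ replaced by $V_i(v_i)$, the other planes fixed. Comparing conclusions and letting $v_i$ vary gives $\alpha^F(u_i,u_j)=0$ for all $u_i\in T_{x_i}M_i$, $u_j\in T_{x_j}M_j$ with $1\le i\neq j\le k$, and $\alpha^F(V_0,u_i)=0$ for every $u_i\in T_{x_i}M_i$. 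Thus $\alpha^F$ is already adapted to the product net of $M'$, and only $\alpha^F(T_{x_0}M_0,T_{x_i}M_i)=0$, $1\le i\le k$, remains.

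\emph{Step 4 (the main obstacle).} This last step cannot be obtained by perturbing $V_0$, since a nearby plane need not be flat — here the non-openness of the hypothesis on $M_0^{n_0}$ really matters. The remedy I would pursue exploits that $M'$ itself has flat planes at $x'$: any plane spanned by a vector of $T_{x_i}M_i$ and a vector of $T_{x_j}M_j$ with $i\neq j$ has vanishing sectional curvature. Assuming $k\ge2$ (for $k=1$ the statement is contained in Proposition~\ref{prop2factors}, as $K_0=0$ forces $K_0+K_1\neq0$), fix $w_0\in T_{x_0}M_0$ and $u_i\in T_{x_i}M_i$, pick $u_j\in T_{x_j}M_j$ with $j\neq i$, and run the construction of Step~1 for the two-factor decomposition $M^n=M_0\times M'$, with the flat plane $\spn\{u_i,u_j\}$ on the $M'$-side and an arbitrary plane $Q\ni w_0$ on the $M_0$-side. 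As in Step~1 the flat plane contributes no $B$-term, so the flat bilinear form is $\alpha^F$ corrected by at most one scalar form coming from $Q$; the orthogonality relations among the normal subspaces $\alpha^F(T_{x_i}M_i,T_{x_i}M_i)$ forced by the Gauss equation, together with the adaptedness already obtained in Step~3, should keep its span small enough for the dichotomy of Step~2 to apply and give $\alpha^F(Q,\spn\{u_i,u_j\})=0$, in particular $\alpha^F(w_0,u_i)=0$. Letting $w_0$ and $u_i$ vary then shows that $\alpha^F$, and hence $\alpha^f$ by the observation in Section~\ref{lightCone}, is adapted to the product net of $M^n$ at $x$. Making this last step precise — above all, checking that the codimension bookkeeping stays within the range where Moore's results on flat bilinear forms apply — is the part I expect to be the main difficulty.
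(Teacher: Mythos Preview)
Your Steps 1--3 are essentially the paper's argument: the flat bilinear form $T=\alpha^F\oplus(B_1,\dots,B_k)$ on the $(2k+2)$-dimensional slice $V=V_0\oplus\cdots\oplus V_k$, the nondegenerate/degenerate dichotomy via Moore's results, and the perturbation of the non-flat planes $V_i$ ($i\ge1$) to reach all of $T_{x_i}M_i$ --- all of this matches the paper line for line. (One small point: in the degenerate branch you do not need to ``iterate''. Once $\mathcal N(T')=V_{j_0}$, the restriction $T'|_{V_{j_0}^\perp\times V_{j_0}^\perp}$ has Riemannian span and trivial nullity, so the nondegenerate argument applies to it directly; this is what the paper does.)

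Step~4 is where there is a genuine gap. Your proposed two-factor construction on $Q\oplus\spn\{u_i,u_j\}$ feeds $\alpha^F$ into a target $T_F^\perp M\oplus\R^s$ of Lorentzian dimension at least $k+2$, while the domain is only $4$-dimensional; Moore's Corollary~1/2 and Theorem~2 need the target dimension not to exceed the domain dimension, and nothing you have established in Step~3 forces $\mathcal S(\alpha^F|_{Q\oplus\spn\{u_i,u_j\}})$ to be that small. The hoped-for ``orthogonality relations'' are not enough to cut the span down, and you yourself flag the bookkeeping as unresolved. As written, Step~4 is a plan, not a proof.

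The paper's route is quite different and worth knowing. From the $n_i=2$ case it extracts one extra fact you omit: in both the nondegenerate and degenerate branches there is a basis $\{X_0,X_1\}$ of $V_0$ with $T(X_0,X_1)$ a multiple of a null vector, hence
\[
\bigl\langle\alpha^F(X_0,X_0),\alpha^F(X_1,X_1)\bigr\rangle=\bigl\|\alpha^F(X_0,X_1)\bigr\|^2=0.
\]
Renaming $e_0=X_0$, $e_1=X_1$ and using the adaptedness from Step~3 plus the Gauss equation, the paper shows that the vectors $\alpha^F(e_{2j},e_{2j})$, $0\le j\le k$, are mutually orthogonal and linearly independent, so $W_0=\spn\{\alpha^F(e_{2j},e_{2j}):0\le j\le k\}$ is $(k+1)$-dimensional with $W_0^\perp=\spn\{\alpha^F(e_1,e_1)\}$; after a rotation of each pair $e_{2j},e_{2j+1}$ one arranges $\alpha^F(e_{2j},e_{2j+1})=0$ and gets the analogous space $W_1$ with $W_1^\perp=\spn\{\alpha^F(e_0,e_0)\}$. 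Now for arbitrary $X\in T_{x_0}M_0$ and $Y\in T_{x_j}M_j$ ($j\ge1$), the Gauss equation forces $\alpha^F(X,Y)\in W_0^\perp$ (or $W_1^\perp$), a \emph{one}-dimensional null line; since $\langle\alpha^F(X,Y),F\rangle=0$, this gives $\alpha^F(X,Y)=0$. No further appeal to Moore's flat-form machinery is needed. The key idea you are missing is precisely this structural description of $T_F^\perp M$ via $W_0$ and $W_1$, which replaces the unavailable perturbation of $V_0$.
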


\begin{proof}
	We first prove the result when $n_i = 2$ for $0 \leq i \leq k$. At $x \in M^n$, take an orthonormal basis $e_0,\cdots, e_{2k+1}$ of $T_xM$ such that $\spn\{e_{2i},e_{2i+1}\} = V^2_i = T_{x_i}M_i$, $0 \leq i \leq k$. Let $\omega^0,\cdots, \omega^{2i+1}$ denote the dual basis and $K_i$ the sectional curvature of $M_i^{n_i}$ at $x_i$, $0 \leq i \leq k$, and $V^{2k+2} = T_xM$. Observe that $K_0=0$ and $K_i \neq 0$ for $1 \leq i \leq k$.
	
	Let $B_i \colon V^{2k+2} \times V^{2k+2}  \to \R$ be the symmetric bilinear form defined by
	$$B_i = \begin{cases}
		\sqrt{-K_i}\left( \omega^{2i}\otimes\omega^{2i} + \omega^{2i+1}\otimes\omega^{2i+1} \right), & \text{if } K_i < 0; \\
		\sqrt{K_i}\left( \omega^{2i}\otimes\omega^{2i} - \omega^{2i+1}\otimes\omega^{2i+1} \right), & \text{if } K_i > 0.
	\end{cases}$$
	
	Let $F \colon M^n \to \mathbb{V}^{n+k+1} \subset \mathbb{L}^{n+k+2}$ be the light-cone representative of $f$ (briefly discussed in Section \ref{lightCone}), and let $\alpha^F$ be the second fundamental form given by \eqref{secondF}. Consider $W^{2k+1,1} = T_F^\perp M \oplus \R^k$, $B = (B_1, \cdots, B_k) \colon V^{2k+2}\times V^{2k+2} \to \R^k$, and the bilinear form
	\begin{equation}\label{flatTform}
 	   T = \alpha^F \oplus B \colon V^{2k+2} \times V^{2k+2} \longrightarrow W^{2k+1,1}.
	\end{equation}
	It is a straightforward calculation to check that \eqref{defFlat} holds for $T$ for the basis $\left\{e_0, \cdots, e_{2k+1} \right\}$, that is, $T$ is flat. It follows from $\interno{T(X,Y)}{F} = -\interno{X}{Y}$, for all $X,Y \in V^{2k+2}$, that $\mathcal{N}(T) = \{0\}$. 

	As in Proposition \ref{prop2factors}, the goal is to prove that $T$ is adapted to the product net of $M^n$ and by construction it will follow that $\alpha^F$ and $\alpha^f$ are also adapted. 

	Suppose that $\mathcal{S}(T)$ is nondegenerated. Thus, $W^{2k+1,1} = \mathcal{S}(T)$ by Corollary 2 in \cite{Moore1977}, otherwise $\mathcal{N}(T) \neq \{0\}$. By Theorem 2 in \cite{Moore1977} there is a basis $\{X_0, \cdots, X_{2k+1}\}$ of $V^{2k+2}$ such that
	$$T(X_i,X_j) = 0, \text{ for all} \,\, i\neq j.$$
	In particular, $B(X_i,X_j) = 0, \text{ for all} \,\, i\neq j$. Therefore it is easy to check that the nullity subspace of each $B_i$, $1 \leq i \leq k$, is spanned by a subset of this basis. Since $V^2_0 = \bigcap_{j=1}^k \mathcal{N}(B_j)$, $V^2_0$ is also spanned by a subset of this basis. Observe that the basis is orthogonal by $\interno{T(X_i,X_j)}{F} = -\interno{X_i}{X_j}$.  Thus, ${V^2_{0}}^\bot$ is also spanned by a subset of this basis. Since
	$$V^2_i = \left(\bigcap_{\substack{j=1, \\ j \neq i}}^k \mathcal{N}(B_j)\right) \cap {V^2_0}^\bot \text{, for all }\,\, 1\leq i \leq k,$$
	it follows that $V^2_i$ is spanned by a subset of this basis. Changing the order of the vectors we can assume that $V^2_i = \spn \{X_{2i}, X_{2i+1}\}$ for $0\leq i \leq k$ and the proposition follows in the nondegenerated case. 

	Suppose now that $\mathcal{S}(T)$ is degenerated, that is, $\mathcal{S}(T) \bigcap \mathcal{S}(T)^\bot = \spn\{\nu\}$ for some $\nu \neq 0$ and $\| \nu \| = 0$. Since $F \notin \mathcal{S}(T)^\bot$, the vectors $\{\nu,F\}$ are linearly independent and we can assume that $\interno{\nu}{F} = -1$. Consider the flat bilinear map 
	\begin{equation}\label{tlinha}
 	   T' = T - \interno{\cdot}{\cdot} \nu,
	\end{equation}
	so $\mathcal{S}(T') \subset \spn\{\nu,F\}^\bot$ is Riemannian by construction. Using Corollary 1 in \cite{Moore1977} it follows that 
	\begin{equation}\label{isoDes}
 	   \dim \mathcal{N}(T') \geq 2k+2- \dim \mathcal{S}(T') \geq 2.
	\end{equation}
	Consider now the decomposition $\nu = \nu_F \oplus v$ in $W^{2k+1,1} = T_FM^{k+1,1} \oplus \R^{k}$, where $v = (v_1,\cdots,v_k)$, and the bilinear forms $B_i' = B_i - \interno{\cdot}{\cdot} v_i$. Thus, $\mathcal{N}(B_i') \subset V^2_i$ if $v_i \neq 0$ and $B_i' = B_i$ if $v_i = 0$. Since $\mathcal{N}(T') \subset \bigcap_{j=1}^k \mathcal{N}(B_j')$, it follows from \eqref{isoDes} that $\mathcal{N}(T') = V^2_{j_0}$ for some $j_0 \in \{0,1,\cdots,k\}.$ Observe that if $v_i = 0$ for all $1 \leq i \leq k$, then $\bigcap_{j=1}^k \mathcal{N}(B_j') = V^2_0$.  Using similar arguments to the nondegenerated case, it follows that $T'|_{{V^2_{j_0}}^\bot \times {V^2_{j_0}}^\bot}$ is adapted. In short, we have the following
	$$T'(Y, Z) = T(Y, Z), \text{ for all} \,\, Y \in V^2_{i}, Z \in {V^2_{i}}^\bot \ \text{and for all $i$},$$
	$$T(X, Z) = \interno{X}{Z}\nu, \text{ for all} \,\, X \in V^2_{j_0}, Z \in V^{2k+2},$$
	and, in particular, $T$ is adapted and the proposition follows.

	Note that in both cases, nondegenerated and degenerated, we have that
	\begin{equation}\label{diagFirstFactor}
 	   T(X_0,X_1) = \interno{X_0}{X_1}\nu,
	\end{equation} 
	for some basis $\{X_0, X_1\} \subset V_0^2$, where $\| \nu \| = 0$ ($\nu=0$ in the nondegerated case). Using Gauss equation and \eqref{diagFirstFactor}, we also have the following
	\begin{equation}\label{perp}
		\interno{\alpha^F(X_0,X_0)}{\alpha^F(X_1,X_1)} = \left\| \alpha^F(X_0,X_1)\right\|^2 = 0.
	\end{equation}

	To treat the general case, choose orthogonal vectors $e_2, \cdots,e_{2k+1} \in T_xM$ such that $e_{2j}, e_{2j+1}$ span a plane $V^2_j \subset T_{x_j} M_j$ with nonvanishing sectional curvature, for all $1 \leq j \leq k$. We can assume that
	\begin{equation}\label{weirdAssumption}
		\left\| \alpha^F(e_{2j}, e_{2j}) \right\| \neq 0 \text{ or}\,\, \interno{\alpha^F(e_{2j}, e_{2j})}{\alpha^F(e_{2j+1}, e_{2j+1})} \neq 0,    
	\end{equation}
	for all $1 \leq j \leq k$, otherwise the curvature of $V_j^2$ would be 0. Let $e_{0}, e_{1} \in T_{x_0}M_0$ be vectors spanning a plane $V^2_0 \subset T_{x_0} M_0$ with vanishing sectional curvature and satisfying \eqref{perp}. We can choose such basis since $\alpha^F|_{V\times V}$, where $V = V_0 \times \cdots \times V_k$, will be adapted by the previous arguments. More than that, we can also prove that $\alpha^F|_{\overline{V}\times \overline{V}}$ is adapted, where $\overline{V} = V_0 \times TM_1 \times \cdots \times TM_k$, using a perturbation argument, as in Proposition \ref{prop2factors}, for the planes $V^2_i$, for $1 \leq i \leq k$.

	Using Gauss equation and the fact that $\alpha^F|_{\overline{V}\times \overline{V}}$ is adapted, it follows that
	$$\interno{\alpha^F(e_{2l,2l})}{\alpha^F(e_{2j},e_{2j})} = 0 \text{ for $j\neq l$, $0 \leq j,l \leq k$},$$
	and from \eqref{weirdAssumption} it follows that the vectors $\alpha^F(e_{2j},e_{2j})$ are linear independent.
	
	Let $W_0 = \spn\set{\alpha^F(e_{2j},e_{2j})}{0 \leq j \leq k}$, thus $W_0^\bot = \spn\left\{\alpha^F(e_1,e_1)\right\}$, by \eqref{perp}. For $ 1 \leq j \leq k$, it is a straightforward calculation to show that
	$$\left\{\alpha^F(e_{2i}, e_{2i})\right\}_{\substack{0 \leq i \leq k,\\ i \ne j}}^\perp = \spn \left\{\alpha^F(e_1,e_1), \alpha^F(e_{2j}, e_{2j}) \right\}$$
	and it follows that $\alpha^F(e_{2j+1},e_{2j+1}), \alpha^F(e_{2j},e_{2j+1}) \in \spn\left\{\alpha^F(e_1,e_1), \alpha^F(e_{2j},e_{2j})\right\}$, $1 \leq j \leq k$. Thus, rotating both vectors $e_{2j},e_{2j+1}$, we can assume that $\alpha^F(e_{2j},e_{2j+1})$ and $\alpha^F(e_{1},e_{1})$ are parallel. Thus, it follows that  $\alpha^F(e_{2j},e_{2j+1}) = 0$, $1 \leq j \leq k$, otherwise $\interno{\alpha^F(e_{2j},e_{2j+1})}{F} \neq 0$.
	
	Analogously, we also have the $(k+1)$-dimensional subspace
	$${W_1} = \spn\set{ \alpha^F(e_{2j+1},e_{2j+1})}{0 \leq j \leq k}$$
	such that $W_1^\bot = \spn\left\{\alpha^F(e_0,e_0)\right\}$.

	We already know that $\alpha^F(X,Y) = 0$ if $X \in T_{x_i} M_i$ and $Y \in TM_j(x)$ for $1 \leq i,j \leq k$, and $i \neq j$. Let $X \in T_{x_0} M_0$ and $l \neq 0$, then Gauss equation gives
	$$\interno{\alpha^F(X,e_{2l})}{\alpha^F(e_{2s+1},e_{2s+1})} = 0,  \text{ for all} \ 0\leq s \leq k.$$
	Thus $\alpha^F(X,e_{2l}) \in W_1^\bot$ and, since $\interno{\alpha^F(X,e_{2l})}{F} = 0$, it follows that $\alpha^F(X,e_{2l}) = 0$. Let $Y \in TM_j(x)$, $1 \leq j \leq k$, then the Gauss equation gives
	$$\interno{\alpha^F(X,Y)}{\alpha^F(e_{2l},e_{2l})} = 0, \text{ for all} \ 0 \leq l \leq k,$$
	thus $\alpha^F(X,Y) \in W_0^\bot$. Since $\interno{\alpha^F(X,Y)}{F} = 0$, it follows that $\alpha^F(X,Y) = 0$. In particular, $\alpha^F$ and $\alpha^f$ are adapted to the product net of $M^n$ at $x$, and the proposition follows.	
\end{proof}

\section{Concluding remarks}\label{remarks}

One way to get a codimension restriction for a conformal immersion it is using the flat bilinear form $T$ given by \eqref{flatTform}. In deed, let $f\colon M^n =\prod_{i=0}^{k} M^{n_i}_i \rightarrow \R^{n+p}$ be a conformal immersion, where $n = \sum_{i=0}^{k} {n_i}$ and $n_i \geq 2$ for $0 \leq i \leq k$. Suppose that $f$ satisfies the hypothesis of Proposition \ref{propkfactors}, but without any restriction over $M^{n_0}_0$. In the proof of Proposition \ref{propkfactors}, instead of taking a plane in the factor $M^{n_0}_0$, choose an arbitrary direction $e_0 \in T_{x_0} M_0$ and denote the span of $\{e_0\}$ by $V^1_0$. We can define a flat bilinear form $T\colon V^1_0\times V^2_1 \times \cdots \times V^2_k \rightarrow (T_F^\bot M)^{p+1,1}\oplus \R^{k}$ identical to \eqref{flatTform} and we will end in one of the following cases:
\begin{enumerate}[(i)]
    \item When $\mathcal{S}(T)$ is nondegenerated, it follows from Corollary 2 in \cite{Moore1977} that $\dim \mathcal{N}(T) = 0 \geq 2k+1-(p+2+k) = k - 1-p$. In other words, $p \geq k - 1$;
    \item When $\mathcal{S}(T)$ is degenerated, we can define the flat bilinear form $T'\colon V^1_0\times V^2_1 \times \cdots \times V^2_k \rightarrow W^{p,0} \oplus \R^{k}\subset (T_F M)^{p+1,1} \oplus \R^{k}$ as in \eqref{tlinha}. It will follow that $\mathcal{N}(T') \subset V_j$ for some $0 \leq j \leq k$. Consequently, $p \geq k -1$ by Corollary 2 in \cite{Moore1977}.
\end{enumerate}
In other words $p \geq k-1$, and this estimate is sharp as can be seen in the trivial example:

\begin{example}
	For each $1 \leq i \leq k$, let $f_i\colon \mathbb{S}^{n_i}_{c_i} \hookrightarrow \R^{n_i+1}$ be the canonical isometric immersion and let $\Theta\colon \QH^{n_0}_{-c} \times \mathbb{S}^{n-n_0+k-1}_{c} \to \R^{n+k-1}$ be a conformal representation of $\R^{n+k-1}$, where $n = \sum_{i=0}^k n_i$ and $\sum_{i=1}^{k} \frac{1}{c_i^{2}} = \frac{1}{c^2}$. Then $f = \Theta \circ \left(I_d \times \tilde f \right) \colon \QH^{n_0}_c \times \prod_{i=1}^{k} \mathbb{S}^{n_i}_i \rightarrow \R^{n+k-1}$ is a conformal immersion in codimension $k-1$, where $I_d$ is the identity of $\QH^{n_0}_{-c}$ and $\tilde f = f_1 \times \cdots \times f_k \colon \prod_{i=1}^{k} \mathbb{S}^{n_i}_{c_i} \rightarrow \mathbb{S}_c^{n-n_0+k-1} \subset \prod_{i=1}^{k} \R^{n_i+1}$ is the natural extrinsic product.
\end{example}

Moreover, using the flat bilinear form described in this section in the proof of Proposition \ref{propkfactors} it will follow that the immersion is adapted in the minimal codimension. Thus, by Theorem \ref{tojeiroRham} the trivial example is essentially the only one in this codimension, since the codimension is not big enough to the immersion be as in the case (i) in Theorem \ref{tojeiroRham}. More precisely, we have the following:

\begin{main}\label{mainMinimal}
	Let $f: M^n \rightarrow \R^{n+k-1}$ be a conformal immersion of a Riemannian product manifold $M^n =\prod_{i=0}^{k} M^{n_i}_i $, where $n =\sum_{i=1}^{k} n_i$ and $n_i \geq 2$ for all $0 \leq i \leq k$. If the subset of points of $M^{n_i}_i$ at which $M_i$ is flat has empty interior, for $1 \leq i \leq k$, then, after possibly relabeling factors, there are isometric immersions $f_0 \colon M^{n_0}_0 \to \QH_{-c}^{n_0}$ and $f_i \colon M^{n_i}_i \to \mathbb{S}_{c_i}^{n_i}$, $1 \leq i \leq k$, such that $\sum_{i=1}^{k}\frac{1}{c^2_i} = \frac{1}{c^2}$ and $f = \Theta \circ \left(f_0 \times \tilde f \right)$, where $\Theta \colon \QH_{-c}^{n_0} \times \mathbb{S}_{c}^{n-n_0+k-1} \to \R^{n+k-1}$ is a conformal representation and $\tilde f = f_1 \times \cdots \times f_k \colon \prod_{i=1}^k M_i^{n_i} \to \prod_{i=1}^k \mathbb{S}_{c_i}^{n_i} \subset \mathbb{S}_c^{n-n_0+k-1}$ is an extrinsic product.
\end{main}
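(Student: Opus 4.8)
The plan is to show that the second fundamental form $\alpha^f$ is adapted to the product net of $M^n$ and then feed this into Tojeiro's decomposition theorem (Theorem~\ref{tojeiroRham}); the point is that in codimension $k-1$ there is no room left for case~(i) of that theorem, and no room for any of the sphere codimensions $s_i$ in case~(ii) to be positive.

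For the adaptedness I would rerun the proof of Proposition~\ref{propkfactors} with the modification indicated in Section~\ref{remarks}. At a point $x=(x_0,\dots,x_k)$ lying in the open dense set where each $M_i^{n_i}$ ($1\le i\le k$) is non-flat at $x_i$, choose an arbitrary unit vector $e_0\in T_{x_0}M_0$, set $V_0^1=\spn\{e_0\}$, choose planes $V_i^2\subset T_{x_i}M_i$ with $K_i\neq 0$, pass to the light-cone representative $F$ of $f$, and form the flat bilinear form
$$T=\alpha^F\oplus(B_1,\dots,B_k)\colon V_0^1\times V_1^2\times\dots\times V_k^2\longrightarrow (T_F^{\perp}M)^{p+1,1}\oplus\R^{k},$$
with $B_i$ as in Proposition~\ref{propkfactors}. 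Since $V_0^1$ is one-dimensional, the curvature tensor $R^{M_0}$ contributes nothing to the Gauss equation on this domain, so $T$ is flat, and $\mathcal N(T)=\{0\}$ because $\interno{T(X,Y)}{F}=-\interno{X}{Y}$. When $p=k-1$ the domain and the codomain of $T$ both have dimension $2k+1$, which is the borderline situation: if $\mathcal S(T)$ is nondegenerate, Corollary~2 forces $\mathcal S(T)$ to be the whole codomain and Theorem~2 in \cite{Moore1977} produces a basis of $V_0^1\times\dots\times V_k^2$ diagonalizing $T$ which, reading off the nullities of the $B_i$, respects the splitting into the $V_j$'s; while if $\mathcal S(T)$ is degenerate, the form $T'=T-\interno{\cdot}{\cdot}\nu$ (with $\nu=\nu_F\oplus(v_1,\dots,v_k)$ spanning the radical of $\mathcal S(T)$, normalized so $\interno{\nu}{F}=-1$) has Riemannian span of dimension at most $2k-1$, so $\dim\mathcal N(T')\ge 2$ by Corollary~1 in \cite{Moore1977}, and inspecting the nullities of $B_i'=B_i-\interno{\cdot}{\cdot}v_i$ forces $\mathcal N(T')=V_{j_0}^2$ for some $1\le j_0\le k$. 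In both cases $T$, hence $\alpha^F$, hence $\alpha^f$, is adapted on $V_0^1\times V_1^2\times\dots\times V_k^2$; letting $e_0$ and the planes $V_i^2$ vary, exactly as in the general case of Proposition~\ref{propkfactors}, gives adaptedness of $\alpha^f$ at $x$, and by continuity and density on all of $M^n$.

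Now Theorem~\ref{tojeiroRham} applies, since $n=\sum_{i=0}^{k}n_i\ge 2(k+1)\ge 3$. In case~(i) we would get an extrinsic product $\tilde f=\tilde{f}_0\times\dots\times\tilde{f}_k$ with $\tilde{f}_i\colon M_i^{n_i}\to\R^{n_i+p_i}$ and $\sum_{i=0}^{k}p_i=p=k-1$; since this is a sum of $k+1$ nonnegative integers equal to $k-1$, at least two of the $p_i$ vanish, hence at least one index $i\in\{1,\dots,k\}$ has $p_i=0$, so $\tilde{f}_i$ is a local isometry onto an open subset of $\R^{n_i}$ and $M_i^{n_i}$ is flat on a nonempty open set, contradicting the hypothesis. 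Hence case~(ii) holds, and there the nonnegative codimensions satisfy $\sum_{i=0}^{k}s_i=p-k+1=0$, so $s_i=0$ for every $i$. After relabeling this yields isometric immersions $f_0\colon M_0^{n_0}\to\QH_{-c}^{n_0}$ and $f_i\colon M_i^{n_i}\to\mathbb{S}_{c_i}^{n_i}$ with $\sum_{i=1}^{k}c_i^{-2}=c^{-2}$, and $f=\Theta\circ(f_0\times\tilde{f})$ with $\tilde{f}=f_1\times\dots\times f_k$ mapping into $\mathbb{S}_c^{n-n_0+k-1}$, which is exactly the assertion (in particular each $f_i$, being a codimension-zero isometric immersion into a space form, is a local isometry, so every factor has constant curvature).

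The step I expect to be the real obstacle is the adaptedness, and inside it the degenerate case for $\mathcal S(T)$: one must check that, with the one-dimensional slot $V_0^1$ in $M_0$, the inequality $\dim\mathcal N(T')\ge 2$ still holds and that the surplus nullity cannot be absorbed into $V_0^1$, so $\mathcal N(T')$ is forced to be one of the two-dimensional factor planes $V_{j_0}^2$ — and this is precisely where the minimality of the codimension $p=k-1$ is used in an essential way. The perturbation over the choices of $e_0$ and of the planes $V_i^2$, and the passage from a dense subset to all of $M^n$, are routine but should be carried out with care.
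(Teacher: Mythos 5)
Your proposal is correct and follows essentially the same route as the paper, which itself only sketches this proof in Section \ref{remarks}: the modified flat bilinear form with the one-dimensional slot $V_0^1$, Moore's Corollary 1, Corollary 2 and Theorem 2 to get adaptedness in the nondegenerate and degenerate cases (where, as you note, minimality of $p=k-1$ forces $\mathcal{N}(T')$ to be one of the planes $V_{j_0}^2$ with $j_0\geq 1$), and then Theorem \ref{tojeiroRham} with the codimension count ruling out case (i) and forcing $s_i=0$ in case (ii). Your write-up in fact supplies slightly more detail than the paper does for this theorem.
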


To conclude this work, we want to make a few remarks. We believe that the additional hypothesis of Theorem \ref{mainKfactors} is not necessary. And without this assumption, the authors expect an similar classification to Theorem 14 in \cite{dajczerTojeiroWarpedCod2}. In other words, that one of the following holds:
\begin{enumerate}[(i)]
	\item $f$ will be adapted and the same conclusion of Theorem \ref{mainKfactors} holds;
	\item $f$ will be a conformal composition, that is, $f = h \circ g$ where $g:\prod_{i=0}^{k} M^{n_i}_i \rightarrow \R^{n+k-1}$ an extrinsic product like in Theorem \ref{mainMinimal} and $h \colon \R^{n+k-1} \to \R^{n+k}$ is a conformal immersion. 
\end{enumerate}

Since the techniques used in this work are algebraic, we make the assumption over the curvature in Theorem \ref{mainKfactors} to exclude the second situation above. In order to improve our results to a classification will be necessary some information about the normal connection.

One of the main inspirations of this work was the following conjecture of Moore that do not receive proper attention in the opinion of the authors.

\begin{conj}[Moore \cite{Moore1971}]\label{conjMoore}
	If $M^{n_1}_1$ (resp. $M^{n_2}_2$) is a Riemannian manifold which can be locally isometrically immersed in $\R^{n_1+p_1}$ (resp. $\R^{n_2+p_2})$ but in no lower-dimensional Euclidean space, then every isometric immersion $f\colon M^{n_1}_1\times M^{n_2}_2 \rightarrow \R^{n_1+n_2+p_1+p_2}$ is an extrinsic product.
\end{conj}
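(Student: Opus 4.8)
The plan is to prove adaptedness of $\alpha^f$ on a dense open set, feed this into Tojeiro's decomposition (Theorem~\ref{tojeiroRham}), and then eliminate its first alternative by a codimension count, so that only the second alternative survives and, at minimal codimension, collapses to the stated factorization.

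For the adaptedness step I would reuse the flat-bilinear-form construction of Section~\ref{remarks} verbatim. Fix a point $x=(x_0,\dots,x_k)$ in the dense open set on which every $M_i^{n_i}$, $1\le i\le k$, is non-flat, and pass to the light-cone representative $F$ of $f$, with $\alpha^F$ given by~\eqref{secondF}. For a single direction $e_0\in T_{x_0}M_0$ and non-flat $2$-planes $V_i^2\subset T_{x_i}M_i$ I form $T=\alpha^F\oplus B$ on $V_0^1\times V_1^2\times\cdots\times V_k^2$ with values in $W=(T_F^\perp M)^{p+1,1}\oplus\R^k$, where $B=(B_1,\dots,B_k)$ encodes the curvatures of the non-flat factors exactly as in Proposition~\ref{propkfactors}. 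Since the codimension is minimal, $p=k-1$, the domain and $W$ both have dimension $2k+1$, so $T$ is square. I would then run the same dichotomy as in Proposition~\ref{propkfactors}: if $\mathcal{S}(T)$ is nondegenerate, then $\mathcal{S}(T)=W$ by Corollary~2 of~\cite{Moore1977} and $T$ diagonalizes by Theorem~2 of~\cite{Moore1977}, so adaptedness is read off $\interno{T(\cdot,\cdot)}{F}=-\interno{\cdot}{\cdot}$; if $\mathcal{S}(T)$ is degenerate, pass to the Riemannian form $T'=T-\interno{\cdot}{\cdot}\nu$ and identify $\mathcal{N}(T')$ with one of the blocks using $\dim\mathcal{N}(T')\ge 2$ (Corollary~1 of~\cite{Moore1977}). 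This yields $\alpha^F(e_0,v)=0$ for $v\in V_1\oplus\cdots\oplus V_k$ together with the mixed vanishing among the $V_i$. Varying $e_0$ over all of $T_{x_0}M_0$ and the $V_i$ over admissible non-flat planes --- non-flatness being an open condition --- a perturbation argument as in Propositions~\ref{prop2factors} and~\ref{propkfactors} upgrades this to $\alpha^f(X,Y)=0$ whenever $X\in TM_i$, $Y\in TM_j$, $i\ne j$; density and continuity then give adaptedness at every point.

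With $\alpha^f$ adapted and $n\ge 3$, Theorem~\ref{tojeiroRham} applies. Its alternative~(i) would give $f=I\circ H\circ(\tilde f_0\times\cdots\times\tilde f_k)$ with $\tilde f_i\colon M_i^{n_i}\to\R^{n_i+p_i}$ and $\sum_{i=0}^k p_i=p=k-1$; but a non-flat factor admits no codimension-zero isometric immersion into $\R^{n_i}$ (such an immersion is a local isometry onto an open subset of $\R^{n_i}$, hence flat), so $p_i\ge 1$ for each $1\le i\le k$, forcing $\sum_{i=1}^k p_i\ge k>k-1$, a contradiction. Hence alternative~(ii) holds. There $\sum_{i=0}^k s_i=p-k+1=0$ with every $s_i\ge 0$, so all $s_i=0$; each $f_i$ is therefore a codimension-zero isometric immersion, that is, a local isometry $f_0\colon M_0^{n_0}\to\QH^{n_0}_{-c}$ and $f_i\colon M_i^{n_i}\to\mathbb{S}^{n_i}_{c_i}$, and the relation $\sum_{i=1}^k\frac{1}{c_i^2}=\frac{1}{c^2}$ is inherited directly. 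This is precisely the asserted $f=\Theta\circ(f_0\times\tilde f)$.

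I expect the adaptedness step to be the main obstacle, and within it the absence of any flatness hypothesis on $M_0$: unlike Proposition~\ref{propkfactors}, there is no flat $2$-plane in $M_0$ to anchor the form, so $M_0$ occupies only the one-dimensional slot $V_0^1$, and it is exactly the minimality of the codimension that renders $T$ square and forces the rigid alternative. Care is needed in the degenerate case to rule out $\mathcal{N}(T')=V_0^1$ on dimensional grounds --- since $\dim\mathcal{N}(T')\ge 2$ while $\dim V_0^1=1$ --- and in the perturbation step to let the single direction $e_0$ range over all of $T_{x_0}M_0$ while keeping every $V_i$ non-flat.
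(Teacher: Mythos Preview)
Your proposal does not address the stated conjecture at all. The statement you were asked to prove is Moore's conjecture: an \emph{isometric} immersion $f\colon M_1^{n_1}\times M_2^{n_2}\to\R^{n_1+n_2+p_1+p_2}$ of a product of \emph{two} factors, each requiring Euclidean codimension exactly $p_i$, must be an extrinsic product. Your write-up instead proves Theorem~\ref{mainMinimal}: a \emph{conformal} immersion of a product of $k+1$ factors into $\R^{n+k-1}$, with $k$ of the factors non-flat on a dense set, factors through $\Theta\circ(f_0\times\tilde f)$. These are different statements in almost every respect --- isometric vs.\ conformal, two factors vs.\ $k+1$, arbitrary $p_1+p_2$ vs.\ the minimal codimension $k-1$, extrinsic product into $\R^N$ vs.\ the $\QH\times\mathbb{S}$ factorization --- and none of your tools (the light-cone representative $F$, Tojeiro's conformal decomposition Theorem~\ref{tojeiroRham}, the count $\sum s_i=p-k+1=0$) even makes sense for the conjecture as stated.

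More importantly, the paper does \emph{not} prove this statement: it is explicitly presented as an open conjecture of Moore and cited as motivation, not as a result of the paper. There is therefore no ``paper's own proof'' to compare against. If your intention was to sketch Theorem~\ref{mainMinimal}, then your outline does match the argument the paper indicates in Section~\ref{remarks} (the $(2k+1)$-dimensional flat form $T$ on $V_0^1\times V_1^2\times\cdots\times V_k^2$, the nondegenerate/degenerate dichotomy, and the elimination of alternative~(i) of Theorem~\ref{tojeiroRham} by the codimension count), but you should relabel your target accordingly.
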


This work also suggests that a similar conjecture remains true in the conformal realm.

\bibliographystyle{abbrv}
\bibliography{bibliography}

\vspace{1eM}
{\footnotesize
\begin{tabular}{lll}
	Felippe Guimar\~aes && Bruno Mendon\c{c}a \\
	Universidade Federal de Sergipe && Universidade Estadual de Londrina \\
	Av. Marechal Rondon s/n && Rod Celso Garcia Cid PR 445 Km 380\\
	49100-000, Aracaju -- SE, Brazil && 86057-970, Londrina -- PR, Brazil \\
	\textit{e-mail:} \texttt{felippe@impa.br} && \textit{e-mail:} \texttt{brunomrs@uel.br}
\end{tabular}}

\end{document}